\def\ps@pprintTitle{%
 \let\@oddhead\@empty
 \let\@evenhead\@empty
 \def\@oddfoot{\centerline{\thepage}}%
 \let\@evenfoot\@oddfoot}
\newtheorem{thm}{Theorem}[section]
\newtheorem{corollary}[thm]{Corollary}
\newtheorem{proposition}[thm]{Proposition}
\newtheorem{lemma}[thm]{Lemma}
\newtheorem{definition}[thm]{Definition}
\newcommand{\trace}{\mathrm{Tr}}
\begin{document}

\begin{frontmatter}

\title{\textbf{Improvement on a Generalized Lieb's Concavity Theorem}}
\author{De Huang\fnref{myfootnote} }
\address{Applied and Computational Mathematics, California Institute of Technology, Pasadena, CA 91125, USA}
\fntext[myfootnote]{E-mail address:\ dhuang@caltech.edu.}
 
\begin{abstract}
We show that Lieb's concavity theorem holds more generally for any unitary invariant matrix function $\phi:\mathbf{H}_+^n\rightarrow \mathbb{R}_+^n$ that is concave and satisfies H\"older's inequality. Concretely, we prove the joint concavity of the function $(A,B) \mapsto\phi\big[(B^\frac{qs}{2}K^*A^{ps}KB^\frac{qs}{2})^{\frac{1}{s}}\big] $ on $\mathbf{H}_+^n\times\mathbf{H}_+^m$, for any $K\in \mathbb{C}^{n\times m}$ and any $s,p,q\in(0,1], p+q\leq 1$. This result improves a recent work by Huang for a more specific class of $\phi$.
\end{abstract}

\begin{keyword}
Lieb's concavity theorem, matrix functions, symmetric forms, operator interpolation, majorization. 
\vspace{2mm}
\MSC[2010] 47A57, 47A63, 15A42, 15A16
\end{keyword}

\end{frontmatter}

\section{Introduction}
Lieb's Concavity Theorem \cite{LIEB1973267}, as one of the most celebrated results in the study of trace inequalities, states that the function 
\begin{equation}\label{eqt:LCT}
(A,B)\ \longmapsto\ \trace[K^*A^pKB^q]
\end{equation}
is jointly concave on $\mathbf{H}_+^n\times\mathbf{H}_+^m$, for any $K\in \mathbb{C}^{n\times m}$, $p,q\in(0,1], p+q\leq 1$. Here $\mathbf{H}_+^n$ is the convex cone of all $n \times n$ Hermitian, positive semidefinite matrices. Recently, Huang \cite{huang2019generalizing} generalized Lieb's result to the concavity of 
\begin{equation}\label{eqt:ktraceLCT}
(A,B) \ \longmapsto\ \trace_k\big[(B^\frac{qs}{2}K^*A^{ps}KB^\frac{qs}{2})^{\frac{1}{s}}\big]^\frac{1}{k}, 
\end{equation}
where the $k$-trace $\trace_k(A)$ of a matrix $A\in \mathbb{C}^{n\times n}$ is defined as 
\[\trace_k(A) = \sum_{1\leq i_1<i_2<\cdots<i_k\leq n} \lambda_{i_1}\lambda_{i_2}\cdots \lambda_{i_k},\quad 1\leq k\leq n,\]
with $\lambda_1,\lambda_2,\dots,\lambda_n$ being the eigenvalues of $A$, counting multiplicities. In this paper, we further improve Huang's results from $k$-trace to any unitary invariant matrix function $\phi:\mathbf{H}_+^n\rightarrow \mathbb{R}_+^n$ that is concave and satisfies H\"older's inequality, i.e. $\phi(|AB|)\leq \phi(|A|^p)^\frac{1}{p}\phi(|B|^q)^\frac{1}{q},p,q\in[1,+\infty],\frac{1}{p}+\frac{1}{q}=1$. For such a matrix function $\phi$, we will show that 
\begin{equation}\label{eqt:PhiLCT}
(A,B) \ \longmapsto\ \phi\big[(B^\frac{qs}{2}K^*A^{ps}KB^\frac{qs}{2})^{\frac{1}{s}}\big] 
\end{equation}
is jointly concave on $\mathbf{H}_+^n\times\mathbf{H}_+^m$, for any $K\in \mathbb{C}^{n\times m}$ and any $s,p,q\in(0,1], p+q\leq 1$. Note that the trace and the general $k$-traces $\trace_k[\cdot]^\frac{1}{k}$ are unitary invariant, concave and satisfy H\"older's inequality. 

Huang's proof for the concavity of \eqref{eqt:ktraceLCT} was based on an operator interpolation theory by Stein \cite{stein1956interpolation}. He obtained an interpolation inequality on $k$-trace from Stein's result by interpreting $\trace_k[A] = \trace[\wedge^k A]$, where $\wedge^k$ stands for the $k$-fold antisymmetric tensor product. However, the use of interpolation actually only requires the unitary invariance and the H\"older property of $k$-trace, rather than its specific form. We hence consider to derive similar interpolation inequalities on more general symmetric functions satisfying H\"older's inequality, by adopting majorization techniques in Huang's framework. Our approach was inspired by a recent work of Hiai et al. \cite{hiai2017generalized}, where they used majorization theories to obtain generalized log-majorization theorems, with application to a strengthened version of the multivariate extension of the Golden-Thompson inequality by Sutter et el. \cite{sutter2017multivariate}.

\subsection*{outline}
The rest of the paper is organized as follows. \Cref{sec:Notations&MainResults} is devoted to introductions of general notations, the notion of symmetric forms and our main results. We will briefly review in \Cref{sec:Preparations} the theories of antisymmetric tensor, majorization and operator interpolation, and use these tools to prove some lemmas on symmetric forms. The proofs of our main theorems are presented in \Cref{sec:Proofs}. In \Cref{sec:Discussions} we discuss some potential improvements of our current results.

\section{Notations and Main Results}
\label{sec:Notations&MainResults}

\subsection{General conventions}
For any positive integers $n,m$, we write $\mathbb{C}^n$ for the $n$-dimensional complex vector spaces equipped with the standard $l_2$ inner products, and $\mathbb{C}^{n\times m}$ for the space of all complex matrices of size $n\times m$. Let $\mathbb{R}^n,\mathbb{R}_+^n,\mathbb{R}_{++}^n$ be $(-\infty,+\infty)^n,[0,+\infty)^n,(0,+\infty)^n$ respectively. Let $\mathbf{H}^n$ be the space of all $n\times n$ Hermitian matrices, $\mathbf{H}_+^n$ be the convex cone of all $n\times n$ Hermitian, positive semi-definite matrices, and $\mathbf{H}_{++}^n$ be the convex cone of all $n\times n$ Hermitian, positive definite matrices. We write $I_n$ for the identity matrix of size $n \times n$. Abusing notation, we will use $i$ sometimes as an integer index and sometimes as the imaginary unit $\sqrt{-1}$ without clarification if no confusion caused. We use $S_n$ to denote the symmetric group of all permutations of order $n$. 

For any $x=(x_1,\dots,x_n),y=(y_1,\dots,y_n)\in \mathbb{R}^n$, we write $x+y$ and $xy$ for the entry-wise sum and entry-wise product respectively, i.e.
\[x+y=(x_1+y_1,\dots,x_n+y_n),\quad xy = (x_1y_1,\dots,x_ny_n).\]
We say $x\leq y$ if $x_i\leq y_i,i=1,\dots,n$, and  $x< y$ if $x_i< y_i,i=1,\dots,n$. For any function scalar function $f:\mathbb{R}\rightarrow\mathbb{R}$, the extension of $f$ to a function from $\mathbb{R}^n$ to $\mathbb{R}^n$ is given by
\[f(x) = (f(x_1),\dots,f(x_n)),\quad x\in \mathbb{R}^n. \]

For any $A\in \mathbf{H}^n$, we use $\lambda_i(A)$ to denote the $i_{\text{th}}$ largest eigenvalue of $A$, i.e. $\lambda_1(A)\geq \lambda_2(A)\geq\cdots\geq \lambda_n(A)$, and write $\lambda(A) = (\lambda_1(A),\dots,\lambda_n(A))\in\mathbb{R}^n$. For any scalar function $f:\mathbb{R}\rightarrow\mathbb{R}$, the extension of $f$ to a function from $\mathbf{H}^n$ to $\mathbf{H}^n$ is given by 
\[f(A)=\sum_{i=1}^nf(\lambda_i(A))u_iu_i^*, \quad A\in \mathbf{H}^n,\]
where $u_1,u_2,\cdots,u_n\in\mathbb{C}^n$ are the corresponding normalized eigenvectors of $A$. A function $f$ is said to be operator monotone increasing (or decreasing) if $A\succeq B$ implies $f(A)\succeq f(B)$ (or $f(A)\preceq f(B)$); $f$ is said to be operator concave (or convex) on some set $S$, if
\[\tau f(A)+(1-\tau) f(B) \preceq f(\tau A+(1-\tau) B)\ (\text{or} \succeq f(\tau A+(1-\tau) B)),\] 
for any $A,B\in S$ and any $\tau\in[0,1]$. For example, the function $A\mapsto A^r$ is both operator monotone increasing and operator concave on $\mathbf{H}_+^n$ for $r\in[0,1]$ (the L\"owner-Heinz theorem \cite{lowner1934monotone}, \cite{heinz1951beitrage}, see also \cite{carlen2010trace}). One can find more discussions and analysis on matrix functions in \cite{carlen2010trace,Vershynina:2013}. For any $A\in\mathbb{C}^{n\times m}$, we write $|A|=(A^*A)^\frac{1}{2}$, and denote by $\|A\|_p$ the standard Schatten $p$-norm,
\begin{equation}
\|A\|_p = \trace[|A|^p]^\frac{1}{p}.
\end{equation}
In particular, we write $\|A\| = \|A\|_{\infty}=$ the largest singular value of $A$.

\subsection{Symmetric forms}
We start with continuous symmetric functions on $\mathbb{R}_+^n$ defined as follows.
\begin{definition}\label{def:SymmetricForm}
A continuous function $\phi:\mathbb{R}^n_+\rightarrow \mathbb{R}_+$ is a \textbf{symmetric form} if $\phi$ satisfies:
\begin{itemize}
\item Homogeneity: $\phi(t x) = t \phi(x)$, for any $x\in \mathbb{R}^n_+,t\in \mathbb{R}_+$.
\item Monotonicity: For any $x,y\in\mathbb{R}^n_+$, $\phi(x)\geq \phi(y)$ if $x\geq y$; $\phi(x)> \phi(y)$ if $x> y$.
\item Symmetry: $\phi(x) = \phi(Px)$ for any $x\in \mathbb{R}^n_+$ and any permutation $P\in S_n$.
\end{itemize}
A symmetric form $\phi$ is \textbf{H\"older}, if it satisfies H\"older's inequality,
\[\phi(xy) \leq \phi(x^p)^\frac{1}{p}\phi(y^q)^\frac{1}{q},\]
for any $x,y\in \mathbb{R}^n_+$ and any $p,q\in[1,+\infty],\frac{1}{p}+\frac{1}{q}=1$. A symmetric form $\phi$ is \textbf{concave (or convex)}, if 
\[\phi(\tau x+(1-\tau)y)\geq \tau \phi(x)+(1-\tau)\phi(y)\ (\text{or }\leq \tau \phi(x)+(1-\tau)\phi(y)),\]
for any $x,y\in \mathbb{R}^n_+$ and any $\tau\in[0,1]$.
\end{definition}

The domain of a symmetric form $\phi$ can be naturally extended from $\mathbb{R}_+^n$ to $\mathbf{H}_+^n$, by feeding $\phi$ the eigenvalues of a matrix in $\mathbf{H}_+^n$. 

\begin{definition}\label{def:MatrixForm}
The extension of a symmetric form $\phi$ to $\mathbf{H}_+^n$ is defined as
\[\phi(A) = \phi(\lambda(A)),\quad A\in \mathbf{H}_+^n,\]
where $\lambda(A) = (\lambda_1(A),\lambda_2(A),\dots,\lambda_n(A))$ are the eigenvalues of $A$. 
\end{definition}

The following properties of the matrix extension result directly from \Cref{def:SymmetricForm}. Moreover, we will see in \Cref{subsec:SymFormProperty} that the matrix extension of a symmetric form will inherit its concavity (or convexity) or H\"older property if it has any. 

\begin{proposition}
If $\phi$ is a symmetric form, then its extension to $\mathbf{H}_+^n$ satisfies: 
\begin{itemize}
\item Homogeneity: $\phi(t A) = t \phi(A)$, for any $A\in \mathbf{H}_+^n,t\in \mathbb{R}_+$.
\item Monotonicity: For any $A,B\in\mathbf{H}_+^n$, $\phi(A)\geq\phi(B)$ if $A\succeq B$; $\phi(A)>\phi(B)$ if $A\succ B$.
\item Unitary invariance: $\phi(U^*AU) = \phi(A)$ for any $A\in \mathbf{H}_+^n$ and any unitary matrix $U\in \mathbb{C}^{n\times n}$.
\end{itemize}
\end{proposition}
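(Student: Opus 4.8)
The plan is to reduce each of the three asserted properties of the matrix extension to the corresponding property of $\phi$ on $\mathbb{R}_+^n$ stated in \Cref{def:SymmetricForm}, using only elementary facts about how eigenvalues transform under scaling, unitary conjugation, and the semidefinite order. The key observation is that $\phi(A)$ depends on $A$ through its ordered eigenvalue vector $\lambda(A)$ alone, so everything comes down to tracking what each operation does to $\lambda(A)$.

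For homogeneity, I would note that for $t\in\mathbb{R}_+$ the eigenvalues of $tA$ are $t\lambda_1(A)\geq\cdots\geq t\lambda_n(A)$, so $\lambda(tA)=t\lambda(A)$, whence $\phi(tA)=\phi(t\lambda(A))=t\phi(\lambda(A))=t\phi(A)$ by homogeneity of the symmetric form. For unitary invariance, since $U^*AU$ is similar to $A$ it has the same multiset of eigenvalues, hence the same ordered eigenvalue vector, so $\phi(U^*AU)=\phi(\lambda(U^*AU))=\phi(\lambda(A))=\phi(A)$ directly from \Cref{def:MatrixForm}. (The symmetry axiom of $\phi$ is not actually used in this argument; it only guarantees that the value does not depend on an artificial choice of ordering.)

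The one step requiring a genuine matrix-analytic input is monotonicity, where I would invoke Weyl's monotonicity principle: $A\succeq B$ in $\mathbf{H}_+^n$ implies $\lambda_i(A)\geq\lambda_i(B)$ for every $i$, i.e.\ $\lambda(A)\geq\lambda(B)$ entrywise; the monotonicity axiom for $\phi$ then gives $\phi(A)=\phi(\lambda(A))\geq\phi(\lambda(B))=\phi(B)$. For the strict clause, from $A\succ B$ I would pick $\varepsilon>0$ with $A-B\succeq\varepsilon I_n$, so $A\succeq B+\varepsilon I_n$ and therefore $\lambda_i(A)\geq\lambda_i(B)+\varepsilon>\lambda_i(B)$ for all $i$; strict monotonicity of $\phi$ then yields $\phi(A)>\phi(B)$.

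I do not expect any real obstacle: the proposition is a routine transfer of the defining axioms through the eigenvalue map, with Weyl's inequality supplying the only nontrivial ingredient. The one mild point worth flagging is the strictness in the monotonicity clause, which is precisely why one passes through the perturbation $B+\varepsilon I_n$ rather than appealing to Weyl's inequality directly.
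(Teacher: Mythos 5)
Your proof is correct and is exactly the routine verification the paper implicitly has in mind (the paper simply asserts these properties ``result directly'' from \Cref{def:SymmetricForm} without writing anything out). Each of your three reductions through $\lambda(\cdot)$ is sound, and you correctly handle the one subtle point, the strict clause of monotonicity, by passing through $A\succeq B+\varepsilon I_n$ so that Weyl's inequality yields the strict entrywise inequality $\lambda(A)>\lambda(B)$ required by the strict monotonicity axiom of the symmetric form.
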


Generally, if a symmetric form $\phi$ is convex and, furthermore, positive definite, i.e. 
\[\phi(x)=0\Longleftrightarrow x = (0,0,\dots,0),\]
then $\phi$ is called a symmetric gauge function. A famous bijection theory of von Neumann \cite{von1937some} says that any unitary invariant matrix norm on $\mathbf{H}_+^n$ is the extension of some symmetric gauge function on $\mathbb{R}_+^n$. Note that a convex symmetric form $\phi$ is automatically H\"older. In this paper, our main results, however, are most related to symmetric forms that are \textbf{concave and H\"older}. Some examples of such class of symmetric forms are listed below.
\begin{enumerate}
\item The k-trace introduced in \cite{2018arXiv180805550H}:
\[\trace_k[x]^\frac{1}{k} = \left(\sum_{1\leq i_1<i_2<\cdots<i_k\leq n} x_{i_1}x_{i_2}\cdots x_{i_k}\right)^\frac{1}{k},\quad x\in\mathbb{R}_+^n,\quad 1\leq k\leq n.\]
\item The sum of rotated partial geometric means:
\[g_k(x) = \sum_{1\leq i_1<i_2<\cdots<i_k\leq n} (x_{i_1}x_{i_2}\cdots x_{i_k})^\frac{1}{k},\quad x\in\mathbb{R}_+^n,\quad 1\leq k\leq n.\]
\item The semi $p$-norm for $p\in (0,1]$:
\[\|x\|_p = \left(\sum_{i=1}^n x_i^p\right)^\frac{1}{p},\quad x\in \mathbb{R}_+^n.\]
\end{enumerate}

\subsection{Main Theorems}
Our main task is to generalize Lieb's concavity theorems from trace to symmetric forms that are concave and H\"older. Huang \cite{huang2019generalizing} applied operator interpolations to obtain generalizations of Lieb's concavity to k-traces $\phi(x) = \trace_k[x]^\frac{1}{k}$, which he used to derive concentration estimates on partial spectral sums of random matrices \cite{2018arXiv180805550H}. The interpolation part of his proof requires essentially the symmetry and H\"older property of $k$-trace. Recently, Hiai et al. \cite{hiai2017generalized}, combined theories of majorization and operator interpolation to extend the multivariate Golden-Thompson inequality to a more general form. Inspired by their work, we also adopt techniques of majorization to further extend Huang's results to the following.

\begin{lemma}\label{lem:GeneralEpstein}
Let $\phi$ be a symmetric form that is H\"older and concave. Then for any $s,r\in(0,1]$ and any $K\in \mathbb{C}^{n\times n}$, the function
\begin{equation}
A \ \longmapsto\ \phi\big((K^*A^{rs}K)^{\frac{1}{s}}\big) 
\label{eqt:function1}
\end{equation}
is concave on $\mathbf{H}_+^n$. 
\end{lemma}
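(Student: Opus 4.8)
The plan is to reduce the statement to a concavity assertion that can be attacked by operator interpolation together with the H\"older property of $\phi$, following the framework of Huang \cite{huang2019generalizing} but replacing $k$-trace estimates by majorization-based inequalities for symmetric forms. First I would observe that, by homogeneity of $\phi$, it suffices to prove concavity after raising to a suitable power: concretely I would show that $A\mapsto \phi\big((K^*A^{rs}K)^{1/s}\big)$ is concave by establishing a superadditivity-type inequality of the form
\[
\phi\Big(\big(K^*(A_0+A_1)^{rs}K\big)^{1/s}\Big)\ \geq\ \phi\big((K^*A_0^{rs}K)^{1/s}\big)+\phi\big((K^*A_1^{rs}K)^{1/s}\big)
\]
for $A_0,A_1\in\mathbf{H}_+^n$, which combined with homogeneity gives full joint concavity along segments. (One should first treat the case $A_0,A_1\in\mathbf{H}_{++}^n$ and pass to the boundary by continuity of $\phi$ and of the matrix functions involved.)

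The core step is an interpolation argument. Following Stein's analytic interpolation as used in \cite{huang2019generalizing}, I would build, on the strip $\{0\le \operatorname{Re} z\le 1\}$, an analytic family of matrices such as $F(z) = K^*\big(A_0^{rz}+A_1^{rz}\big)\cdots$ — more precisely a family whose boundary values on $\operatorname{Re} z = 0$ are unitaries (contributing norm $1$ under $\phi$ after normalization) and whose boundary values on $\operatorname{Re} z=1$ reproduce the pieces $K^*A_j^{rs}K$. Using the H\"older property of the symmetric form $\phi$, one controls $\phi$ of the interior value in terms of a geometric interpolation of $\phi$ of the boundary values; the key inequality I expect to need is a log-majorization statement: that the vector of eigenvalues of $(K^*(A_0+A_1)^{rs}K)^{1/s}$ is log-majorized by (a Hadamard product involving) those of $(K^*A_0^{rs}K)^{1/s}$ and $(K^*A_1^{rs}K)^{1/s}$, after which monotonicity and the H\"older-derived submultiplicativity of $\phi$ under majorization (Lemma-level facts from \Cref{sec:Preparations}) finish the estimate. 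This is exactly where the hypotheses ``H\"older and concave'' are both used: H\"older to convert the operator interpolation bound into a bound on $\phi$, and concavity of $\phi$ (equivalently the concavity of the underlying symmetric function, which by Lemma-level results in \Cref{subsec:SymFormProperty} is inherited by the matrix extension) to combine the two normalized boundary contributions additively.

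The main obstacle, I expect, is establishing the right log-majorization / interpolation inequality for the composite map $A\mapsto (K^*A^{rs}K)^{1/s}$ when $K$ is an arbitrary (possibly non-invertible, non-square reduced to square) matrix: one cannot simply use operator convexity/concavity of $A\mapsto A^t$ because the exponents $rs$ and $1/s$ are being composed around the conjugation by $K$, so the argument genuinely needs the analytic-interpolation device rather than an elementary Jensen-type inequality. A secondary technical point is handling the non-injectivity of $K$ and the rank drop of $K^*A^{rs}K$: I would handle this by a standard perturbation $K\rightsquigarrow K+\varepsilon I$, $A\rightsquigarrow A+\varepsilon I$, proving the inequality in the non-degenerate case and then letting $\varepsilon\to 0^+$, using continuity of $\phi$ on $\mathbb{R}_+^n$ and the fact that eigenvalues depend continuously on the matrix. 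Once \eqref{eqt:function1} is in place for square $K$, the general rectangular case of \eqref{eqt:PhiLCT} follows by the usual block-matrix trick of embedding $A\oplus B$ and an off-diagonal $K$ into a single Hermitian argument, so I would not belabor that reduction here.
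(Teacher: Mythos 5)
Your proposal has a genuine gap at the very first step: the reduction to superadditivity is not valid, and in fact the target superadditivity inequality you write down is \emph{false}. The map $\Phi(A):=\phi\big((K^*A^{rs}K)^{1/s}\big)$ is positively homogeneous of degree $r$ (not degree $1$), since $\Phi(tA)=t^r\Phi(A)$. When $r<1$, concavity of $\Phi$ does \emph{not} imply $\Phi(A_0+A_1)\geq\Phi(A_0)+\Phi(A_1)$; the most one gets is $\Phi(A_0+A_1)\geq 2^{r-1}\big(\Phi(A_0)+\Phi(A_1)\big)$. The superadditivity claim already fails in dimension one with $K=1$, $s=1$, $\phi=\mathrm{id}$: then $\Phi(a)=a^r$ and $(1+1)^r=2^r<2$ for $r<1$. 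So you cannot establish concavity by proving the inequality you propose; it is unprovable because it is untrue. You need to work directly with a convex combination $C=\tau A+(1-\tau)B$, which is what the paper does.

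The interpolation part of the proposal is also too vague to carry the argument, and the shape of the analytic family you sketch looks wrong. A family like $F(z)=K^*(A_0^{rz}+A_1^{rz})\cdots$ does not produce $\big(K^*(A_0+A_1)^{rs}K\big)^{1/s}$ at $z=s$, so it cannot close the loop. The paper's construction is different in two essential ways that your sketch misses. First, it builds \emph{separate} analytic families for each endpoint, $G_A(z)$ and $G_B(z)$, both anchored to the common matrix $C=\tau A+(1-\tau)B$ via $M=C^{rs/2}K$ and its polar decomposition $M=Q|M|$; the factor $Q|M|^{z/s}$ is what makes $G_X(it)$ unitary on the left boundary (so that boundary's contribution trivializes as $p_0\to\infty$) while $|G_X(s)|^{2/s}$ recovers the target expression. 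Second, after interpolation one is left with a convex combination of terms of the form $\phi\big(|M|^{1/s}Q^*C^{-r(1\mp it)/2}X^{r}C^{-r(1\pm it)/2}Q|M|^{1/s}\big)$, and the decisive analytic ingredient is the \emph{operator concavity} of $X\mapsto X^{r}$ on $\mathbf{H}_+^n$ for $r\in(0,1]$ (L\"owner--Heinz), combined with monotonicity and concavity of the matrix extension of $\phi$ (\Cref{lem:ConPreserving}); your proposal does not identify this ingredient at all. Without it there is no way to recombine the $A$- and $B$-terms back into the $C$-term.

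Your continuity/perturbation remarks are fine, and the reference to Stein--Hirschman interpolation and the H\"older property of $\phi$ are on target as far as they go. But as written, the proof has two fatal gaps: (i) the false superadditivity reduction, and (ii) an analytic family that does not evaluate to the desired quantities on the boundary and in the interior, with the polar-decomposition trick and the operator concavity of $X\mapsto X^r$ both missing.
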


\begin{thm}[Generalized Lieb's Concavity Theorem]\label{thm:GeneralLiebConcavity}
Let $\phi$ be a symmetric form that is H\"older and concave. Then for any $s,p,q\in(0,1],p+q\leq 1$, and any $K\in \mathbb{C}^{n\times m}$, the function 
\begin{equation}
(A,B) \ \longmapsto\ \phi\big((B^\frac{qs}{2}K^*A^{ps}KB^\frac{qs}{2})^{\frac{1}{s}}\big) 
\label{eqt:function2}
\end{equation}
is jointly concave on $\mathbf{H}_+^n\times\mathbf{H}_+^m$. 
\end{thm}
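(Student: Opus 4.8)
The plan is to prove \Cref{thm:GeneralLiebConcavity} by the operator-interpolation method Huang \cite{huang2019generalizing} used for $k$-traces, reinforced with majorization in the style of Hiai et al.\ \cite{hiai2017generalized} so that the argument speaks about an arbitrary H\"older concave symmetric form, with \Cref{lem:GeneralEpstein} as the one-variable input. I begin with two routine reductions: (i) replacing $A$ by $A+\varepsilon I_n$ and $B$ by $B+\varepsilon I_m$ and letting $\varepsilon\downarrow 0$, by continuity of $\phi$ and of the matrix functions, reduces to $A\in\mathbf{H}_{++}^n$, $B\in\mathbf{H}_{++}^m$; (ii) appending a zero block to $A$ and zero rows to $K$ when $n<m$, or a zero block to $B$ and zero columns to $K$ when $n>m$ --- operations linear in the argument that leave $B^{\frac{qs}{2}}K^*A^{ps}KB^{\frac{qs}{2}}$, and hence the value of $\phi$ on it, unchanged (a symmetric form restricting consistently under adjoining zero coordinates) --- reduces to the case $n=m$ with $K$ square.

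Since $(A,B)\mapsto\phi\big((B^{\frac{qs}{2}}K^*A^{ps}KB^{\frac{qs}{2}})^{\frac1s}\big)$ is homogeneous of degree $p+q\le 1$, joint concavity is equivalent to the logarithmic form
\[
\phi\big((B_\theta^{\frac{qs}{2}}K^*A_\theta^{ps}KB_\theta^{\frac{qs}{2}})^{\frac1s}\big)\ \geq\ \phi\big((B_0^{\frac{qs}{2}}K^*A_0^{ps}KB_0^{\frac{qs}{2}})^{\frac1s}\big)^{1-\theta}\,\phi\big((B_1^{\frac{qs}{2}}K^*A_1^{ps}KB_1^{\frac{qs}{2}})^{\frac1s}\big)^{\theta}
\]
for all $\theta\in[0,1]$, where $A_\theta=(1-\theta)A_0+\theta A_1$, $B_\theta=(1-\theta)B_0+\theta B_1$: rescaling each pair $(A_j,B_j)$ so that the two right-hand factors become $1$ and choosing $\theta$ suitably turns this, via concavity of $t\mapsto t^{p+q}$, into the arithmetic-mean inequality defining joint concavity. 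To obtain the logarithmic inequality I would construct an operator-valued function $F(z)$, analytic and bounded on the strip $0\le\mathrm{Re}\,z\le 1$ and continuous up to its boundary, assembled from the non-commutative convex interpolations joining $A_\theta$ to $A_0,A_1$ and $B_\theta$ to $B_0,B_1$, so that the restrictions of $F$ to the two boundary lines are controlled --- via operator convexity of $t\mapsto t^{-1}$ and operator concavity of $t\mapsto t^{r}$, $r\in(0,1]$, which absorb the arithmetic means $A_\theta,B_\theta$ --- by the data $B_j^{\frac{qs}{2}}K^*A_j^{ps}KB_j^{\frac{qs}{2}}$, while the value at $z=\theta$ equals $B_\theta^{\frac{qs}{2}}K^*A_\theta^{ps}KB_\theta^{\frac{qs}{2}}$ up to unitary conjugation (the exponents $ps,qs,\frac1s$ entering precisely through these power functions). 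A Stein-type interpolation estimate \cite{stein1956interpolation}, recast as a log-majorization of the singular-value sequences rather than of a single Schatten norm, then relates the spectrum of $(B_\theta^{\frac{qs}{2}}K^*A_\theta^{ps}KB_\theta^{\frac{qs}{2}})^{\frac1s}$ to those of the two boundary quantities; because $\phi$ is symmetric, monotone, homogeneous and H\"older, this majorization passes through $\phi$ to yield the displayed inequality, with \Cref{lem:GeneralEpstein} supplying the concavity of the single-variable constituents that appear along each boundary line.

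The main obstacle, and the step I expect to be hardest, is exactly this passage: upgrading the scalar Stein interpolation inequality --- all that is needed for $k$-traces --- to a majorization statement strong enough to be read off by an arbitrary H\"older concave symmetric form, while simultaneously engineering a single analytic family $F$ that produces, at $z=\theta$, the exact operator $B_\theta^{\frac{qs}{2}}K^*A_\theta^{ps}KB_\theta^{\frac{qs}{2}}$ and, on the two boundary lines, the data pairs $(A_0,B_0)$ and $(A_1,B_1)$ with their generally unequal exponents $ps$ and $qs$ and the rectangular $K$. It is precisely the H\"older and homogeneity axioms of $\phi$ that make the passage from singular-value log-majorization to the inequality for $\phi$ possible, and \Cref{lem:GeneralEpstein} that lets the boundary contributions be recombined; getting all three to cohere with the interpolation construction is the technical core of the proof.
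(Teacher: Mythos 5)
Your toolbox is the right one---Stein interpolation recast as log-majorization, the H\"older and homogeneity axioms of $\phi$ to transport majorization through $\phi$, and \Cref{lem:GeneralEpstein} as the one-variable input---but the step you flag as hardest is exactly where your plan diverges from what works, and I do not think it can be closed as you have set it up. You propose to reduce joint concavity to a geometric-mean inequality in the convexity weight $\theta$ and then to build a single holomorphic family $F$ on the strip with $F(\theta)$ recovering $B_\theta^{qs/2}K^*A_\theta^{ps}KB_\theta^{qs/2}$ and boundary values controlled by the $j=0,1$ data. The obstruction is that $A_\theta=(1-\theta)A_0+\theta A_1$ is an \emph{arithmetic} mean: there is no holomorphic family in the weight parameter $z$ that produces $A_z$ at real $z$ while staying bounded and positivity-compatible on the vertical lines; the natural holomorphic interpolants such as $A_0^{1-z}A_1^z$ give geometric means, not $A_\theta$, so the "non-commutative convex interpolations" you describe cannot be the right ansatz. (The geometric-mean reformulation itself is a valid equivalence for a nonnegative function homogeneous of degree $p+q\le 1$, but it points you at interpolating in the wrong variable.)

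The paper's proof avoids this entirely by \emph{not} interpolating in the convexity weight. It fixes the convex combination $C_j=\tau A_j+(1-\tau)B_j$ and $M=C_1^{ps/2}KC_2^{qs/2}$ once and for all, and for each $X\in\{A,B\}$ \emph{separately} defines
\[
G_X(z)=X_1^{rsz/2}C_1^{-rsz/2}\,M\,C_2^{-rs(1-z)/2}X_2^{rs(1-z)/2},\qquad r=p+q,
\]
so that the interpolation parameter is the \emph{split of the total exponent $r$ between the $A$-slot and the $B$-slot}: at $z=p/r$ one gets $|G_X(p/r)|^{2/s}=(X_2^{qs/2}K^*X_1^{ps}KX_2^{qs/2})^{1/s}$, and \Cref{lem:KeyLemma} with $\theta=p/r$, $p_0=p_1=p_\theta=2/s$ controls $\phi(|G_X(p/r)|^{2/s})$ by an integral of boundary terms. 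After unitary invariance, $\phi(|G_X(1+it)|^{2/s})$ and $\phi(|G_X(it)|^{2/s})$ each reduce to one-variable expressions of the shape $\phi\big((K_t^*X_j^{rs}K_t)^{1/s}\big)$ with only $X_1$ (resp.\ $X_2$) appearing, so \Cref{lem:GeneralEpstein} bounds the $\tau$-convex combination of the $X=A$ and $X=B$ boundary terms by the constant $\phi((M^*M)^{1/s})$ pointwise in $t$; integrating against the interpolation weights (which sum to one) finishes. There is no single $F$: the two families $G_A,G_B$ are only combined \emph{after} interpolation, by concavity of the boundary data. Finally, a minor point: your padding reduction to a square $K$ is unnecessary and actually conflicts with your earlier reduction to strictly positive definite $A,B$ (a zero block destroys positivity); the paper keeps $K\in\mathbb{C}^{n\times m}$ rectangular throughout, and the interpolation lemma applies without change.
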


\begin{thm}\label{thm:GeneralLieb}
Let $\phi$ be a symmetric form that is H\"older and concave. Then for any $H\in \mathbf{H}^n$ and any $\{p_j\}_{j=1}^m\subset(0,1]$ such that $\sum_{j=1}^mp_j\leq1$, the function 
\begin{equation}
(A_1,A_2,\dots,A_m) \ \longmapsto\ \phi\big(\exp\big(H+\sum_{j=1}^mp_j\log A_j\big)\big)
\label{eqt:function3}
\end{equation}
is jointly concave on $(\mathbf{H}_{++}^n)^{\times m}$. In particular, $A\mapsto\phi\big(\exp(H+\log A)\big)$ is concave on $\mathbf{H}_{++}^n$.
\end{thm}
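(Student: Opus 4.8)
The plan is to deduce \Cref{thm:GeneralLieb} from \Cref{thm:GeneralLiebConcavity} by a single Lie--Trotter-type limiting argument, after packing all $m$ matrix variables together with the Hermitian perturbation $H$ into the one middle factor $K$ and the one matrix slot $A$ of \eqref{eqt:function2} through a direct-sum embedding. Suppose first that $m\ge2$ (the case $m=1$ is handled at the end) and fix $p_1,\dots,p_m\in(0,1]$ with $\sum_{j=1}^m p_j\le1$. Put $p:=\sum_{j=1}^{m-1}p_j$, $q:=p_m$ and $c_j:=p_j/p>0$, so that $\sum_{j=1}^{m-1}c_j=1$ and, crucially, $p,q\in(0,1]$ with $p+q=\sum_{j=1}^m p_j\le1$ --- precisely the admissibility conditions in \Cref{thm:GeneralLiebConcavity}. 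For $s\in(0,1]$ introduce the stacked matrix
\[ K_s=\begin{bmatrix}\sqrt{c_1}\,e^{sH/2}\\ \vdots\\ \sqrt{c_{m-1}}\,e^{sH/2}\end{bmatrix}\in\mathbb{C}^{(m-1)n\times n}, \]
and, for $(A_1,\dots,A_m)\in(\mathbf{H}_{++}^n)^{\times m}$, set $\mathbf{A}:=A_1\oplus\cdots\oplus A_{m-1}\in\mathbf{H}_{++}^{(m-1)n}$ and $B:=A_m$, so that $\mathbf{A}^{ps}=\bigoplus_{j=1}^{m-1}A_j^{ps}$ and $K_s^*\mathbf{A}^{ps}K_s=e^{sH/2}\big(\sum_{j=1}^{m-1}c_j A_j^{ps}\big)e^{sH/2}$.

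Applying \Cref{thm:GeneralLiebConcavity} --- with ambient dimensions $(m-1)n$ and $n$, middle matrix $K_s$, exponents $p,q$, and parameter $s$ --- shows that for each fixed $s\in(0,1]$ the map
\[ (A_1,\dots,A_m)\ \longmapsto\ \phi\Big(\big(B^{qs/2}e^{sH/2}\big(\textstyle\sum_{j=1}^{m-1}c_j A_j^{ps}\big)e^{sH/2}B^{qs/2}\big)^{1/s}\Big) \]
is jointly concave on $(\mathbf{H}_{++}^n)^{\times m}$: the assignment $(A_1,\dots,A_m)\mapsto(\mathbf{A},B)$ is linear onto a convex subset of $\mathbf{H}_+^{(m-1)n}\times\mathbf{H}_+^n$, and joint concavity is preserved under restriction to a convex subset and precomposition with a linear map.

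It then remains to let $s\to0^+$. A first-order expansion gives $B^{qs/2}=I+\tfrac{qs}{2}\log B+O(s^2)$, $e^{sH/2}=I+\tfrac{s}{2}H+O(s^2)$ and, since $\sum_j c_j=1$, $\sum_{j=1}^{m-1}c_j A_j^{ps}=I+ps\sum_{j=1}^{m-1}c_j\log A_j+O(s^2)$; multiplying the five factors and using $pc_j=p_j$ and $q\log B=p_m\log A_m$,
\[ B^{qs/2}e^{sH/2}\big(\textstyle\sum_{j=1}^{m-1}c_j A_j^{ps}\big)e^{sH/2}B^{qs/2}=I+sL+O(s^2),\qquad L:=H+\sum_{j=1}^{m}p_j\log A_j, \]
whence $\big(B^{qs/2}e^{sH/2}(\sum_j c_j A_j^{ps})e^{sH/2}B^{qs/2}\big)^{1/s}=\exp\big(s^{-1}\log(I+sL+O(s^2))\big)\longrightarrow e^{L}$ as $s\to0^+$, for each fixed $(A_1,\dots,A_m)$. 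Since $\phi$ is continuous on $\mathbf{H}_+^n$ --- being the composition of the continuous sorted-eigenvalue map $\mathbf{H}^n\to\mathbb{R}^n$ with $\phi$ on $\mathbb{R}_+^n$ --- the jointly concave functions above converge pointwise to $(A_1,\dots,A_m)\mapsto\phi\big(\exp(H+\sum_{j=1}^m p_j\log A_j)\big)$, which is therefore jointly concave, a pointwise limit of jointly concave functions being jointly concave. This proves \Cref{thm:GeneralLieb} for $m\ge2$; the case $m=1$, including the displayed particular case $A\mapsto\phi(\exp(H+\log A))$ (which is $m=1$, $p_1=1$), follows by specializing the $m=2$ statement to $A_2=I_n$ --- valid for every $p_1<1$ by choosing an auxiliary weight $p_2\in(0,1-p_1]$ --- and then letting $p_1\to1^-$ and invoking continuity of $\phi$ once more.

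I do not expect a genuine obstacle once this embedding is set up: the real content of \Cref{thm:GeneralLieb} already lives in \Cref{thm:GeneralLiebConcavity}. What needs attention is purely bookkeeping --- choosing $(p,q,c_j,K_s)$ so that \emph{every} admissible weight vector $(p_1,\dots,p_m)$ is realized while $p,q\in(0,1]$ and $p+q\le1$ remain in force --- together with the Trotter expansion, where only pointwise convergence is needed so the crude $O(s^2)$ remainder already suffices; the $m=1$, $p_1=1$ boundary is the one remaining minor case.
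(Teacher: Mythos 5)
Your proof is correct and takes a genuinely different route from the paper. Both approaches ultimately rest on a Lie--Trotter-type limit applied to the concavity supplied by \Cref{thm:GeneralLiebConcavity}, but you handle all $m$ matrix variables simultaneously by packing $A_1,\dots,A_{m-1}$ into a block-diagonal $\mathbf{A}$ and absorbing the weights and $H$ into the middle matrix $K_s$, then letting $s\to0^+$; this yields the general-$m$ statement in a single limiting step directly from \Cref{thm:GeneralLiebConcavity}. The paper instead proceeds in two stages: it first establishes the $m=1$ case by taking $K_N=\exp(H/2N)$ in \eqref{eqt:function1} with $s=1/N$ and letting $N\to\infty$ (although it cites \Cref{thm:GeneralLiebConcavity}, the boundary case $p_1=1$ really needs \Cref{lem:GeneralEpstein}), and then bootstraps to general $m$ via a Jensen-type decomposition that exploits the convexity of $X\mapsto\phi(\exp(L+X))$ on $\mathbf{H}^n$ (\Cref{lem:MaxtrixPhiExp}) to reduce to $m$ separate applications of the $m=1$ case. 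Your route avoids \Cref{lem:MaxtrixPhiExp} altogether, at the modest cost of a supplementary $p_1\to1^-$ limit to reach $m=1$, $p_1=1$ (which the paper's reliance on \Cref{lem:GeneralEpstein} covers directly since $r=1$ is admissible there). Both arguments are valid; yours is arguably cleaner in that it exhibits \Cref{thm:GeneralLieb} as a pure limiting consequence of \Cref{thm:GeneralLiebConcavity} alone, without invoking the Golden--Thompson machinery behind \Cref{lem:MaxtrixPhiExp}.
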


\Cref{lem:GeneralEpstein} is an extension of the concave part of Lemma 2.8 in \cite{Carlen2008} (see also \cite{epstein1973remarks}), which is a direct consequence of the original Lieb's concavity theorem. We will first apply the technique of operator interpolation to prove \Cref{lem:GeneralEpstein} independently, and then use it to derive the other results. \Cref{thm:GeneralLiebConcavity} is our generalized Lieb's concavity theorem, which not only extends the original Lieb's concavity to any symmetric form that is concave and H\"older, but also strengthens its form by adding the power $s$. \Cref{thm:GeneralLieb} is a generalization of Corollary 6.1 in \cite{LIEB1973267}. Lieb proved the original trace version by checking the non-positiveness of the second order directional derivatives (or Hessians). Huang \cite{2018arXiv180805550H} imitated Lieb's derivative arguments and proved the concavity of $A\mapsto\trace_k\big[\exp(H+\log A)\big]^\frac{1}{k}$, which he then generalized from $m=1$ to $m\geq 1$ in \cite{huang2019generalizing}. We here further extend this result to symmetric forms that are concave and H\"older. The proofs of our main results are diverted to \Cref{sec:Proofs}.

\section{Preparations}
\label{sec:Preparations}

\subsection{Antisymmetric tensors} 
\label{subsec:AntiTensor}
Theories of antisymmetric tensors have been useful tools for deriving important majorization relations between eigenvalues of matrices (see e.g. \cite{hiai2010matrix,Rotman:2114272}). For any $1\leq k\leq n$, let $\wedge^k(\mathbb{C}^n)$ denote the $k$-fold antisymmetric tensor space of $\mathbb{C}^n$, equipped with the inner product 
\begin{align*}
\langle \cdot,\cdot\rangle_{\wedge^k}:\quad \wedge^k(\mathbb{C}^n)\times\wedge^k(\mathbb{C}^n)\ &\longrightarrow\ \mathbb{C}\\
\langle u_1\wedge\cdots\wedge u_k,v_1\wedge\cdots\wedge v_k\rangle_{\wedge^k}\ &= 
\det\left[\begin{array}{cccc}
 \langle u_1,v_1\rangle & \langle u_1,v_2\rangle  & \cdots & \langle u_1,v_k\rangle \\
 \langle u_2,v_1\rangle & \langle u_2,v_2\rangle & \cdots & \langle u_2,v_k\rangle\\
 \vdots & \vdots & \ddots & \vdots \\
 \langle u_k,v_1\rangle & \langle u_k,v_2\rangle & \cdots & \langle u_k,v_k\rangle
\end{array}\right],
\end{align*}
where $\langle u,v\rangle=u^*v$ is the standard $l_2$ inner product on $\mathbb{C}^n$. Let $\mathcal{L}(\wedge^k(\mathbb{C}^n))$ denote the space of all linear operators from $\wedge^k(\mathbb{C}^n)$ to itself. For any matrix $A\in\mathbb{C}^{n\times n}$, we define $\wedge^kA \in \mathcal{L}(\wedge^k(\mathbb{C}^n))$ by
\[\wedge^kA (v_1\wedge v_2\wedge\cdots\wedge v_k) = Av_1\wedge Av_2\wedge\cdots\wedge Av_k\]
for any elementary product $v_1\wedge v_2\wedge\cdots\wedge v_k\in \wedge^k(\mathbb{C}^n)$, with linear extension to all other elements in $\wedge^k(\mathbb{C}^n)$. We will be using the following properties of $\wedge^k A$. 
\begin{itemize}
\item Invertibility: If $A\in \mathbb{C}^{n\times n}$ is invertible, then $(\wedge^kA)^{-1} = \wedge^kA^{-1}$.
\item Adjoint: For any $A\in \mathbb{C}^{n\times n}$, $(\wedge^kA)^*=\wedge^kA^*$, with respect to the inner product $\langle \cdot,\cdot\rangle_{\wedge^k}$. In particular, if $A\in \mathbf{H}^n$, then $\wedge^kA$ is Hermitian.
\item Power: For any $A\in\mathbf{H}^n$ and any $t\in\mathbb{C}$, $(\wedge^kA)^t = \wedge^kA^t$.
\item Positiveness: If $A\in \mathbf{H}_+^n$, then $\wedge^kA\succeq \mathbf{0}$; if $A\in \mathbf{H}_{++}^n$, then $\wedge^kA\succ\mathbf{0}$. 
\item Product: For any $A,B\in \mathbb{C}^{n\times n}$, $\wedge^k(AB) = (\wedge^kA)(\wedge^kB)$.
\item Spectrum: If $\{\lambda_i\}_{i=1}^n$ are all eigenvalues of $A\in \mathbb{C}^{n\times n}$, then $\{\lambda_{i_1}\lambda_{i_2}\cdots\lambda_{i_k}\}_{1\leq i_1<i_2<\cdots<i_k\leq n}$ are all eigenvalues of $\wedge^kA$. In particular, if $A\in \mathbf{H}_+^n$, then $\lambda_1(\wedge^kA) = \prod_{i=1}^k\lambda_i(A)$.
\end{itemize}
Using these properties, one can check that for any $A\in \mathbb{C}^{n\times n}$,
\[|\wedge^kA|=\big((\wedge^kA)^*\wedge^kA\big)^\frac{1}{2}=\wedge^k(A^*A)^\frac{1}{2}=\wedge^k|A|.\]

\subsection{Majorization} \label{subsec:Majorization}
Given a vector $a = (a_1,a_2,\dots,a_n) \in \mathbb{R}^n$, we use $a_{[j]}$ to denote the $j_{\text{th}}$ largest entry of $a$. For any two vector $a,b\in \mathbb{R}^n$, $a$ is said to be weakly majorized by $b$, denoted by $a\prec_w b$, if 
\[\sum_{j=1}^ka_{[j]}\leq \sum_{j=1}^kb_{[j]}, \quad 1\leq k\leq n;\] 
moreover, a is said to be majorized by $b$, denoted by $a\prec b$, if equality holds for $k=n$, i.e.
\[\sum_{j=1}^na_j=\sum_{j=1}^nb_j.\] 
The following two lemmas are most important for deriving inequalities from majorization relations. One may refer to \cite{ando1989majorization,marshall1979inequalities,hiai2010matrix} for proofs and more discussions on this topic. 

\begin{lemma}\label{lem:MajorBridge}
For $a,b\in \mathbb{R}^n$, if $a\prec_w b$, then there is some $c \in \mathbb{R}^n$ such that $a\leq c\prec b$.
\end{lemma}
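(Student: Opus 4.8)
The plan is to reduce to the case where $a$ and $b$ are both arranged in decreasing order: the relations $\prec_w$ and $\prec$ are unchanged under permuting either vector, and once a suitable $c$ has been found for the sorted version of $a$ it can be permuted back. So I assume $a_1\ge\cdots\ge a_n$ and $b_1\ge\cdots\ge b_n$, and set $S_k=\sum_{j=1}^k(b_j-a_j)$, so that $S_k\ge 0$ for all $k$ and $a\prec b$ holds exactly when $S_n=0$.

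If $S_n=0$ I may simply take $c=a$. Otherwise $S_n>0$, and I would build $c$ by repeatedly pushing up the smallest coordinates of $a$. In one step, let the minimal coordinates of $a$ be $a_{m+1}=\cdots=a_n$ (with $a_m$ strictly larger, and $m=0$ if $a$ is constant), and replace each of these $n-m$ coordinates by $a_n+\delta$, where $\delta=\min\{a_m-a_n,\ S_n/(n-m)\}$ (reading $a_m-a_n=+\infty$ when $m=0$); note $\delta>0$. The resulting vector $a'$ is still decreasing and satisfies $a\le a'$. The crucial point is that $a'\prec_w b$ still holds: for $k\le m$ the partial sums are unchanged, while for $m<k\le n$ the increase equals $(k-m)\delta$, and this is at most $S_k$ because $S_m,S_{m+1},\dots,S_n$ is concave in the index — its consecutive increments $S_j-S_{j-1}=b_j-a_n$ for $m<j\le n$ are nonincreasing since $b$ is decreasing — so that $S_k\ge\frac{k-m}{n-m}S_n\ge(k-m)\delta$ using $S_m\ge 0$.

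I would then iterate this step with $a'$ in place of $a$. If $\delta=S_n/(n-m)$, then $\sum_j a'_j=\sum_j b_j$, hence $a'\prec b$ and $c=a'$ completes the proof. Otherwise $\delta=a_m-a_n$, and then $a'$ has strictly more coordinates equal to its minimum than $a$ did, so this branch can occur only finitely often (at most $n-1$ times) before the first case is forced; in particular the constant-vector case $m=0$ always terminates in a single step.

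The only real work I anticipate is this bookkeeping: verifying that the upward push preserves weak majorization via the concavity estimate on $S_m,\dots,S_n$, and that the iteration terminates; the remaining details are routine. (Alternatively one could simply invoke the standard references \cite{ando1989majorization,marshall1979inequalities,hiai2010matrix}, but the argument sketched here is short and self-contained.)
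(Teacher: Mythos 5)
The paper does not prove this lemma; it states it as a standard fact and cites \cite{ando1989majorization,marshall1979inequalities,hiai2010matrix}. Your argument is therefore necessarily a different route, and I have checked it: it is correct and self-contained.

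A few remarks on the verification. The reduction to sorted vectors is valid because $\prec_w$ and $\prec$ are invariant under permuting either argument, while $\leq$ is preserved when the \emph{same} permutation is applied to both sides; this is exactly why you should sort $a$ and carry its permutation back to $c$, and you do this correctly. The central estimate is the concavity bound: for $m\le k\le n$ the increments $S_j-S_{j-1}=b_j-a_n$ are nonincreasing because $b$ is sorted and $a_j=a_n$ on that range, so by the standard chord inequality for concave sequences
\begin{equation*}
S_k \;\ge\; \frac{n-k}{\,n-m\,}\,S_m \;+\; \frac{k-m}{\,n-m\,}\,S_n \;\ge\; \frac{k-m}{\,n-m\,}\,S_n \;\ge\; (k-m)\,\delta,
\end{equation*}
using $S_m\ge 0$ and $\delta\le S_n/(n-m)$. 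This gives $a'\prec_w b$, and $a'$ remains nonincreasing since $\delta\le a_m-a_n$. Termination is also correct: each time the branch $\delta=a_m-a_n$ is taken, $a'_m=a_m=a_n+\delta$, so the block of minimal coordinates strictly grows and the index $m$ strictly decreases; once $m=0$ the other branch is forced and the process halts with $\sum_j a'_j=\sum_j b_j$, i.e.\ $a'\prec b$. This ``water-filling'' construction is somewhat different from the textbook proofs the paper points to (which typically proceed via $T$-transforms or via the characterization of majorization through doubly stochastic matrices, cf.\ \Cref{lem:MajorKey}), but it is elementary and arguably more transparent; what it forgoes is the extra structure those proofs deliver, such as an explicit doubly stochastic matrix carrying $b$ to $c$.
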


\begin{lemma}\label{lem:MajorKey}
For $a,b\in \mathbb{R}^n$, $a\prec b$ if and only if $a=Db$ for some doubly stochastic matrix $D$, i.e. $D_{ij}\geq 0, 1\leq i,j\leq n$, $\sum_{j=1}^nD_{ij} = 1, 1\leq i\leq n$ and $\sum_{i=1}^nD_{ij} = 1, 1\leq j\leq n$.
\end{lemma}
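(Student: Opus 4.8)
The plan is to prove the two implications of the equivalence separately, in each case first reducing to the case where $a$ and $b$ are both arranged in nonincreasing order. This reduction is harmless: permutation matrices are doubly stochastic, products of doubly stochastic matrices are again doubly stochastic, and $a\prec b$ depends only on the decreasing rearrangements of $a$ and $b$; so one may assume $a_1\geq\cdots\geq a_n$ and $b_1\geq\cdots\geq b_n$, produce $D$ in that case, and then multiply $D$ on the left and right by the permutation matrices that sort $a$ and $b$ back to obtain the general statement.

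For the implication $a=Db\ \Rightarrow\ a\prec b$, I would argue directly from the row- and column-sum conditions on $D$. Summing $a=Db$ over all coordinates and using that each column of $D$ sums to $1$ gives $\sum_{i}a_i=\sum_{j}b_j$, the equality case $k=n$. For $1\leq k<n$, write $\sum_{i=1}^k a_i=\sum_{j}t_jb_j$ with $t_j:=\sum_{i=1}^kD_{ij}$; then $0\leq t_j\leq 1$ because columns sum to $1$, and $\sum_j t_j=k$ because rows sum to $1$, and since $b$ is nonincreasing,
\[
\sum_{j=1}^n t_jb_j-\sum_{j=1}^k b_j=\sum_{j=1}^k(t_j-1)b_j+\sum_{j=k+1}^n t_jb_j\ \leq\ \Big(\sum_{j=1}^n t_j-k\Big)b_k=0,
\]
where the inequality uses $(t_j-1)b_j\leq(t_j-1)b_k$ for $j\leq k$ and $t_jb_j\leq t_jb_k$ for $j>k$. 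Hence $\sum_{i=1}^k a_i\leq\sum_{i=1}^k b_i$ for every $k$, i.e. $a\prec b$.

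For the converse $a\prec b\ \Rightarrow\ a=Db$ I would use the Hardy--Littlewood--P\'olya construction by successive \emph{T-transforms}. If $a=b$ take $D=I$; otherwise put $c:=b$ and iterate the following step. Since $\sum c=\sum a$ and $c\neq a$, there is a largest index $j$ with $c_j>a_j$, and then, using $a\prec c$ (in particular the partial-sum inequality at index $j-1$) together with the total-sum equality, there is a smallest index $k>j$ with $c_k<a_k$; one checks $c_j>c_k$. Setting $\varepsilon:=\min\{c_j-a_j,\ a_k-c_k\}>0$, replace $c$ by the vector $c'$ with $c'_j=c_j-\varepsilon$, $c'_k=c_k+\varepsilon$ and $c'_i=c_i$ otherwise; this is $c'=Tc$ for the doubly stochastic matrix $T=\lambda I+(1-\lambda)Q_{jk}$, where $Q_{jk}$ transposes coordinates $j$ and $k$ and $\lambda=1-\varepsilon/(c_j-c_k)\in[0,1]$. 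The substantive checks are: $c'$ is again nonincreasing; the relation of majorizing $a$ is preserved, i.e. $a\prec c'$ (for a partial sum whose cut lies strictly between $j$ and $k$ one uses that $c_i\geq a_i$ for all $j<i<k$, by minimality of $k$, and the inequality at index $j-1$); and $c'$ agrees with $a$ in strictly more coordinates than $c$ does, since $\varepsilon$ forces $c'_j=a_j$ or $c'_k=a_k$. As the number of agreeing coordinates cannot exceed $n$, after finitely many steps $c$ becomes $a$; composing all the T-transforms (and restoring the original orderings) yields a doubly stochastic $D$ with $a=Db$.

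The routine portions are the algebra in the first implication and the positivity/monotonicity bookkeeping for $c'$ in the second. I expect the main obstacle to be the converse: the single T-transform must simultaneously preserve the property of majorizing $a$ and guarantee genuine progress toward $a$, and it is precisely the extremal choice of the pair $j<k$ (largest $j$ with $c_j>a_j$, then smallest $k>j$ with $c_k<a_k$) that makes every needed inequality fall into place; verifying this carefully is the heart of the argument. One could instead deduce the equivalence from Birkhoff's theorem on the extreme points of the set of doubly stochastic matrices together with the description of $\{x:x\prec b\}$ as the convex hull of the permutations of $b$, but proving that description is essentially no cheaper than the T-transform argument above.
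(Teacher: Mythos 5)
Your proof is correct. Note, however, that the paper does not prove \Cref{lem:MajorKey} at all: it is quoted as a standard fact, with the reader referred to the references (Ando, Marshall--Olkin, Hiai--Petz) for proofs, so there is no in-paper argument to compare against. What you have written is precisely the classical Hardy--Littlewood--P\'olya argument found in those references: the easy implication by the row/column-sum computation with $t_j=\sum_{i\le k}D_{ij}$, and the converse by finitely many T-transforms. The delicate points are all handled or at least correctly flagged: the existence of the smallest $k>j$ with $c_k<a_k$ does follow from the partial-sum inequality at index $j-1$ together with $\sum c=\sum a$, as you say; the parameter $\lambda=1-\varepsilon/(c_j-c_k)$ lies in $[0,1]$ because $c_j-c_k\geq (c_j-a_j)+(a_k-c_k)\geq\varepsilon$ (using $a_j\geq a_k$); monotonicity of $c'$ uses that $c_i=a_i$ for $j<i<k$ (maximality of $j$ plus minimality of $k$), which also gives $a\prec c'$ for cuts between $j$ and $k$; and the agreement count strictly increases since only coordinates $j,k$ change and $\varepsilon$ forces one of them to hit $a$. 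Your remark that one could instead combine Birkhoff's theorem with the convex-hull description of $\{x:x\prec b\}$ is also accurate; the paper itself invokes exactly that Birkhoff-type reformulation right after the lemma (majorization as a convex combination of permutations), which your T-transform construction delivers directly, since each $T$ is an explicit convex combination of the identity and a transposition.
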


Since any doubly stochastic matrix is a convex combination of permutation matrices, an equivalent statement of \Cref{lem:MajorKey} is that, $a\prec b$ if and only if $a$ is a convex combination of permutations of $b$, i.e.
\[a = \sum_{j=1}^m\tau_j P_jb,\]
for some $\{P_j\}_{j=1}^m\subset S_n$ and some $\{\tau_j\}_{j=1}^m\subset[0,1]$ such that $\sum_{j=1}^m\tau_j=1$.

\begin{lemma}\label{lem:MajorSymmetry}
Let $\phi:\mathbb{R}^n\rightarrow \mathbb{R}$ be convex and symmetric such that $\phi(x)=\phi(Px)$ for any $x\in \mathbb{R}^n$ and any permutation $P\in S_n$. Then for any $a,b\in \mathbb{R}^n$ such that $a\prec b$, $\phi(a)\leq \phi(b)$.  
\end{lemma}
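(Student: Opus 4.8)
The plan is to reduce everything to the convex-combination characterization of majorization recorded right after \Cref{lem:MajorKey}. Since $a\prec b$, that remark supplies permutations $\{P_j\}_{j=1}^m\subset S_n$ and weights $\{\tau_j\}_{j=1}^m\subset[0,1]$ with $\sum_{j=1}^m\tau_j=1$ such that $a=\sum_{j=1}^m\tau_jP_jb$. (Equivalently, one could start from a doubly stochastic $D$ with $a=Db$ and decompose $D$ into permutation matrices by Birkhoff's theorem; the two routes are identical.)

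With this in hand the rest is a one-line computation: convexity of $\phi$ gives
\[\phi(a)=\phi\!\left(\sum_{j=1}^m\tau_jP_jb\right)\ \leq\ \sum_{j=1}^m\tau_j\,\phi(P_jb),\]
and the permutation-symmetry hypothesis $\phi(P_jb)=\phi(b)$ collapses the right-hand side to $\left(\sum_{j=1}^m\tau_j\right)\phi(b)=\phi(b)$, which is exactly $\phi(a)\le\phi(b)$.

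I do not expect any genuine obstacle here: this is the classical Hardy--Littlewood--P\'olya/Rado fact, and the only ingredients are Birkhoff's theorem (already absorbed into \Cref{lem:MajorKey} and the comment following it), Jensen's inequality for the convex map $\phi$, and the symmetry assumption. The only point meriting a moment's care is bookkeeping --- making sure the version of majorization being quoted is the convex-combination form --- after which the argument is immediate.
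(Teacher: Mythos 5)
Your argument is correct and is essentially identical to the paper's proof: both invoke the convex-combination form of majorization (Birkhoff, via the remark after \Cref{lem:MajorKey}) to write $a=\sum_j\tau_jP_jb$, then apply convexity of $\phi$ and permutation-invariance to conclude $\phi(a)\le\phi(b)$. No gap, no divergence from the paper's route.
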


\begin{proof} Since $a\prec b$, we have $a = \sum_{j=1}^m\tau_j P_jb$ for some permutations $\{P_j\}_{j=1}^m\subset S_n$ and some $\{\tau_j\}_{j=1}^m\subset[0,1]$ such that $\sum_{j=1}^m\tau_j=1$. Therefore
\[\phi(a) = \phi(\sum_{j=1}^m\tau_j P_jb)\leq \sum_{j=1}^m\tau_j \phi(P_jb) = \sum_{j=1}^m\tau_j \phi(b) = \phi(b). \]
\end{proof}

The next lemma shows two majorization relations between eigenvalues of matrices, which are widely used in theories of matrix norms (see e.g. \cite{marshall1979inequalities,horn2012matrix}). 

\begin{lemma}\label{lem:MatrixMajorization}
For any $A,B\in\mathbf{H}^n$, 
\begin{equation}\label{eqt:MatrixMajorization1}
\lambda(A+B)\prec \lambda(A)+\lambda(B).
\end{equation}
For any $A,B\in\mathbf{H}_+^n$, 
\begin{equation}\label{eqt:MatrixMajorization2}
\log \lambda(|AB|)\prec \log (\lambda(A)\lambda(B)).
\end{equation}

\begin{proof} For any Hermitian matrix $A\in\mathbf{H}^n$ and any $1\leq k\leq n$, we have that 
\[\sum_{j=1}^k\lambda_j(A) = \max_{\begin{subarray}{c} U\in \mathbb{C}^{n\times k}\\ U^*U = I_k \end{subarray}}\trace[U^*AU].\]
Therefore, for any $A,B\in\mathbf{H}^n$, we have
\begin{align*}
\sum_{j=1}^k\lambda_j(A+B) =&\ \max_{\begin{subarray}{c} U\in \mathbb{C}^{n\times k}\\ U^*U = I_k \end{subarray}}\trace[U^*(A+B)U]\\
\leq&\ \max_{\begin{subarray}{c} U\in \mathbb{C}^{n\times k}\\ U^*U = I_k \end{subarray}}\trace[U^*AU] + \max_{\begin{subarray}{c} U\in \mathbb{C}^{n\times k}\\ U^*U = I_k \end{subarray}}\trace[U^*BU]\\
=&\ \sum_{j=1}^k\lambda_j(A) + \sum_{j=1}^k\lambda_j(B).
\end{align*}
And obviously we have 
\[\sum_{j=1}^n\lambda_j(A+B)=\trace[A+B]=\trace[A]+\trace[B]= \sum_{j=1}^n\lambda_j(A) + \sum_{j=1}^n\lambda_j(B).\]
Therefore $\lambda(A+B)\prec \lambda(A)+\lambda(B)$.

For any $A,B\in\mathbf{H}_+^n$, we have
\[\lambda_1(|AB|) = \|BA^2B\|^\frac{1}{2}\leq \|A\|\|B\| = \lambda_1(A)\lambda_1(B). \]
Substituting $A,B$ with $\wedge^kA,\wedge^kB$ respective, we get 
\[\lambda_1(|(\wedge^kA)(\wedge^kB)|)\leq \lambda_1(\wedge^kA)\lambda_1(\wedge^kB). \]
Since $\lambda_1(\wedge^kA) = \prod_{j=1}^k\lambda_j(A)$, $\lambda_1(\wedge^kB) = \prod_{j=1}^k\lambda_j(B)$, and 
\[\lambda_1(|(\wedge^kA)(\wedge^kB)|) = \lambda_1(\wedge^k|AB|) = \prod_{j=1}^k\lambda_j(|AB|),\]
we immediately have that, for $1\leq k\leq n$.
\[\prod_{j=1}^k\lambda_j(|AB|)\leq \prod_{j=1}^k\lambda_j(A)\lambda_j(B).\]
Obviously, we have
\[\prod_{j=1}^n\lambda_j(|AB|) = \det[|AB|] = \det[A]\det[B] = \prod_{j=1}^n\lambda_j(A)\lambda_j(B).\]
Therefore $\log \lambda(|AB|)\prec \log (\lambda(A)\lambda(B))$.
\end{proof}

\end{lemma}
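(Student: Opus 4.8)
The plan is to prove the two majorizations separately; in each case I would first establish the partial-sum inequality for every $1\le k\le n$ (i.e.\ weak majorization) and then upgrade it to full majorization by observing that equality is forced at $k=n$.

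For \eqref{eqt:MatrixMajorization1} I would invoke Ky Fan's maximum principle: for $A\in\mathbf{H}^n$ and $1\le k\le n$,
\[\sum_{j=1}^k\lambda_j(A)=\max_{\substack{U\in\mathbb{C}^{n\times k}\\ U^*U=I_k}}\trace[U^*AU].\]
Since the quantity inside the maximum is linear in $A$, subadditivity of the supremum gives $\sum_{j=1}^k\lambda_j(A+B)\le\sum_{j=1}^k\lambda_j(A)+\sum_{j=1}^k\lambda_j(B)$ for every $k$, and at $k=n$ both sides equal $\trace[A]+\trace[B]$, so the inequality must be an equality there. Hence $\lambda(A+B)\prec\lambda(A)+\lambda(B)$. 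The only ingredient to recall is the variational formula itself, which is classical.

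For \eqref{eqt:MatrixMajorization2} I would reduce the general $k$ to the case $k=1$ via the $k$-fold antisymmetric tensor power. The case $k=1$ is just submultiplicativity of the operator norm: for $A,B\in\mathbf{H}_+^n$ one has $|AB|^2=(AB)^*(AB)=BA^2B$, so $\lambda_1(|AB|)=\|AB\|=\|BA^2B\|^{1/2}\le\|A\|\,\|B\|=\lambda_1(A)\lambda_1(B)$. Applying this with $A,B$ replaced by $\wedge^kA,\wedge^kB\in\mathbf{H}_+^{\binom{n}{k}}$ and using the tensor identities recorded in \Cref{subsec:AntiTensor} — namely $\wedge^k(AB)=(\wedge^kA)(\wedge^kB)$, $|\wedge^kA|=\wedge^k|A|$, and $\lambda_1(\wedge^kC)=\prod_{j=1}^k\lambda_j(C)$ for $C\in\mathbf{H}_+^n$ — converts the estimate into $\prod_{j=1}^k\lambda_j(|AB|)\le\prod_{j=1}^k\lambda_j(A)\lambda_j(B)$, i.e.\ $\sum_{j=1}^k\log\lambda_j(|AB|)\le\sum_{j=1}^k\log\bigl(\lambda_j(A)\lambda_j(B)\bigr)$. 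Here $\log\lambda(|AB|)$ is already sorted nonincreasingly while $\log\lambda(A)+\log\lambda(B)$ need not be, which is exactly why only a partial-sum (weak-majorization) statement comes out at this stage. Finally, at $k=n$ the product is $\det|AB|=\det A\,\det B=\prod_{j=1}^n\lambda_j(A)\lambda_j(B)$, giving equality and hence $\log\lambda(|AB|)\prec\log(\lambda(A)\lambda(B))$.

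The main (really the only) point of substance lies in \eqref{eqt:MatrixMajorization2}: one must notice that norm submultiplicativity is precisely the $k=1$ instance and that the antisymmetric tensor functor lifts it uniformly to all $k$. A minor technical wrinkle is that on the boundary of $\mathbf{H}_+^n$ some eigenvalues vanish and $\log$ takes the value $-\infty$; the cleanest fix is to prove the multiplicative inequalities $\prod_{j=1}^k\lambda_j(|AB|)\le\prod_{j=1}^k\lambda_j(A)\lambda_j(B)$ directly (they need no logarithms) and to read off the log-majorization on $\mathbf{H}_{++}^n$, extending to $\mathbf{H}_+^n$ by continuity if a logarithmic phrasing is preferred. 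No further idea is needed.
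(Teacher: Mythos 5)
Your proposal is correct and follows essentially the same route as the paper: Ky Fan's variational formula plus subadditivity for \eqref{eqt:MatrixMajorization1}, and norm submultiplicativity lifted through $\wedge^k$ for \eqref{eqt:MatrixMajorization2}, with equality at $k=n$ forced by trace and determinant respectively. Your remark about zero eigenvalues on the boundary of $\mathbf{H}_+^n$ is a sensible caveat the paper leaves implicit, but it does not change the argument.
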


\subsection{Properties of symmetric forms}
\label{subsec:SymFormProperty}

\begin{lemma}\label{lem:PhiExp}
A symmetric form $\phi$ is H\"older if any only if $\phi\circ\exp$ is convex on $\mathbb{R}^n$.
\end{lemma}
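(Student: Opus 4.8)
The plan is to reduce everything to properties of $\psi:=\phi\circ\exp$, where $\exp(x)=(e^{x_1},\dots,e^{x_n})$. Since $\exp$ maps $\mathbb{R}^n$ into $\mathbb{R}^n_{++}\subset\mathbb{R}^n_+$ and $\phi$ is continuous, $\psi$ is well defined and continuous on all of $\mathbb{R}^n$; moreover, by strict monotonicity $\psi(x)=\phi(\exp x)>\phi(\bzero)=0$ for every $x\in\mathbb{R}^n$. Two elementary facts do all the work: the entrywise identity $\exp(u+v)=\exp(u)\exp(v)$, and the translation rule $\psi(x+t\bone)=e^{t}\psi(x)$ for $x\in\mathbb{R}^n$, $t\in\mathbb{R}$, which is just degree-one homogeneity of $\phi$ rewritten.

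For the ``only if'' implication I would assume $\phi$ is H\"older, fix $u,v\in\mathbb{R}^n$ and $\tau\in(0,1)$, and set $p=1/\tau$, $q=1/(1-\tau)$, so that $p,q\in(1,\infty)$ and $\frac1p+\frac1q=1$. Applying H\"older's inequality to $x=\exp(\tau u)$ and $y=\exp((1-\tau)v)$, and using $xy=\exp(\tau u+(1-\tau)v)$, $x^{p}=\exp(u)$, $y^{q}=\exp(v)$, gives
\[\psi\big(\tau u+(1-\tau)v\big)\ \le\ \psi(u)^{\tau}\psi(v)^{1-\tau}\ \le\ \tau\,\psi(u)+(1-\tau)\,\psi(v),\]
the second step being the weighted AM--GM inequality; the cases $\tau\in\{0,1\}$ are immediate. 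Hence $\psi$ is convex (in fact log-convex).

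For the ``if'' implication I would first upgrade convexity of $\psi$ to log-convexity using only homogeneity: given $u,v\in\mathbb{R}^n$, put $\tilde u=u-\log\psi(u)\,\bone$ and $\tilde v=v-\log\psi(v)\,\bone$, so that $\psi(\tilde u)=\psi(\tilde v)=1$ by the translation rule, whence, for every $\tau\in[0,1]$,
\[\psi\big(\tau u+(1-\tau)v\big)=\psi(u)^{\tau}\psi(v)^{1-\tau}\,\psi\big(\tau\tilde u+(1-\tau)\tilde v\big)\ \le\ \psi(u)^{\tau}\psi(v)^{1-\tau}\big(\tau\,\psi(\tilde u)+(1-\tau)\,\psi(\tilde v)\big)=\psi(u)^{\tau}\psi(v)^{1-\tau},\]
where the inequality is just convexity of $\psi$. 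With $\psi$ log-convex, I fix $x,y\in\mathbb{R}^n_{++}$ and $p,q\in(1,\infty)$ with $\frac1p+\frac1q=1$ and apply the displayed inequality with $u=p\log x$, $v=q\log y$, $\tau=\frac1p$: since $\exp(u)=x^{p}$, $\exp(v)=y^{q}$ and $\tau u+(1-\tau)v=\log(xy)$, it reads precisely $\phi(xy)\le\phi(x^{p})^{1/p}\phi(y^{q})^{1/q}$. The case of general $x,y\in\mathbb{R}^n_+$ then follows by applying this to $x+\varepsilon\bone$, $y+\varepsilon\bone$ and letting $\varepsilon\downarrow0$, using continuity of $\phi$; and the endpoint exponents $\{p,q\}=\{1,\infty\}$ follow by letting $q\to\infty$ in the finite-exponent inequality, the limit $\lim_{q\to\infty}\phi(y^{q})^{1/q}$ existing because $q\mapsto\log\phi(y^{q})=\log\psi(q\log y)$ is convex and bounded above by $q\log(\max_i y_i)+\log\phi(\bone)$.

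The two inequality chains are the substance of the proof and are short. The step I expect to need the most care is the passage in the ``if'' direction to the degenerate and boundary cases --- vectors with zero entries and the exponents $1$ and $\infty$ --- which should nevertheless be routine, resting only on continuity, monotonicity and homogeneity of $\phi$. A secondary point worth spelling out is the convex-to-log-convex upgrade, whose only input is the translation rule $\psi(x+t\bone)=e^{t}\psi(x)$.
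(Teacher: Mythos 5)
Your proof is correct and takes essentially the same route as the paper's. The forward direction is identical: set $p=1/\tau$, $q=1/(1-\tau)$, apply H\"older to $\exp(\tau u)$ and $\exp((1-\tau)v)$, then AM--GM. In the converse direction the paper inserts a scaling parameter $t$ and minimizes, obtaining $\phi(xy)\leq\phi(x^{1/\tau})^{\tau}\phi(y^{1/(1-\tau)})^{1-\tau}$ at the optimal $t$; your normalization $\tilde u=u-\log\psi(u)\,\bone$, $\tilde v=v-\log\psi(v)\,\bone$ is exactly the same device --- homogeneity turns convexity of $\psi$ into log-convexity --- but isolated as an explicit intermediate step, which reads a bit more cleanly. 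You also make precise two points the paper handles with a bare ``by continuity'': passing from $\mathbb{R}^n_{++}$ to $\mathbb{R}^n_+$ by an $\varepsilon$-perturbation, and obtaining the endpoint exponents $\{1,\infty\}$ from the convexity and boundedness of $q\mapsto\log\phi(y^q)$; both of these details are correct and worth having on the record. The one prerequisite you rightly flag --- $\psi>0$ everywhere so that $\log\psi$ is defined --- follows from strict monotonicity and homogeneity of $\phi$ exactly as you say.
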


\begin{proof}
If $\phi$ is H\"older, then for any $x,y\in \mathbb{R}^n$ and any $\tau\in(0,1)$, we have
\[\phi[\exp(\tau x+(1-\tau)y)]\leq \phi(\exp x)^\tau\phi(\exp y )^{1-\tau}\leq \tau\phi(\exp x )+(1-\tau)\phi(\exp y).\]
Thus $\phi\circ\exp$ is convex on $\mathbb{R}^n$. Conversely, if $\phi\circ\exp$ is convex, then for any $x,y\in \mathbb{R}^n_+,t\in \mathbb{R}_+$ and any $\tau\in(0,1]$, we have
\begin{align*}
\phi(x y) =&\ \phi[\exp\big(\tau \log( (t x)^\frac{1}{\tau})+(1-\tau)\log ((\frac{y}{t})^\frac{1}{1-\tau})\big)]\\
\leq&\ \tau \phi[\exp\log( (t x)^\frac{1}{\tau})] + (1-\tau)\phi[\exp\log ((\frac{y}{t})^\frac{1}{1-\tau})]\\
=&\ \tau t^\frac{1}{\tau} \phi(x^\frac{1}{\tau}) + \frac{(1-\tau)}{t^\frac{1}{1-\tau}}\phi(y^\frac{1}{1-\tau}).
\end{align*}
Minimizing the last line above with $t = \Big(\frac{\phi(y^\frac{1}{1-\tau})}{\phi(x^\frac{1}{\tau})}\Big)^{\tau(1-\tau)}$ gives 
\[\phi(x y)\leq \phi(x^\frac{1}{\tau})^\tau \phi(y^\frac{1}{1-\tau})^{1-\tau}.\]
The extreme cases when $\tau = 0,1$ can be obtained by continuity. Thus $\phi$ is H\"older.
\end{proof}

\begin{lemma}\label{lem:MatrixHolder}
If a symmetric form $\phi$ is H\"older, then its extension to $\mathbf{H}_+^n$ is also H\"older, i.e.
\[\phi(|AB|)\leq \phi(A^p)^\frac{1}{p}\phi(B^q)^\frac{1}{q},\]
for any $A,B\in \mathbf{H}_+^n$ and any $p,q\in[1,+\infty],\frac{1}{p}+\frac{1}{q}=1$.
\end{lemma}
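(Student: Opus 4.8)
The plan is to reduce the asserted matrix Hölder inequality to the scalar one built into \Cref{def:SymmetricForm}, using the multiplicative majorization \eqref{eqt:MatrixMajorization2} of \Cref{lem:MatrixMajorization} as the bridge. First I would treat the case $A,B\in\mathbf{H}_{++}^n$, so that $\lambda(A),\lambda(B)\in\mathbb{R}_{++}^n$ and $\log\lambda(A),\log\lambda(B),\log\lambda(|AB|)$ are genuine vectors in $\mathbb{R}^n$. The general case $A,B\in\mathbf{H}_+^n$ is then recovered at the end by applying the positive definite case to $A+\varepsilon I_n$ and $B+\varepsilon I_n$ and letting $\varepsilon\downarrow 0$, using that a symmetric form is continuous on $\mathbb{R}_+^n$ and that eigenvalues (hence $|AB|$, $A^p$, $B^q$) vary continuously with the matrix.

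For $A,B\in\mathbf{H}_{++}^n$ the argument is a chain of three inequalities. By \eqref{eqt:MatrixMajorization2}, $\log\lambda(|AB|)\prec\log(\lambda(A)\lambda(B))$. By \Cref{lem:PhiExp} the map $\phi\circ\exp$ is convex on $\mathbb{R}^n$, and it is permutation-symmetric since $\phi$ is; hence \Cref{lem:MajorSymmetry} applies to $\phi\circ\exp$ and gives
\[\phi\big(\lambda(|AB|)\big)=(\phi\circ\exp)\big(\log\lambda(|AB|)\big)\leq(\phi\circ\exp)\big(\log(\lambda(A)\lambda(B))\big)=\phi\big(\lambda(A)\lambda(B)\big).\]
Next, applying the scalar Hölder property of $\phi$ to the vectors $\lambda(A),\lambda(B)\in\mathbb{R}_+^n$,
\[\phi\big(\lambda(A)\lambda(B)\big)\leq\phi\big(\lambda(A)^p\big)^{\frac1p}\phi\big(\lambda(B)^q\big)^{\frac1q}.\]
Finally, since $A,B\succeq\mathbf{0}$ one has $\lambda_i(A)^p=\lambda_i(A^p)$ and $\lambda_i(B)^q=\lambda_i(B^q)$, so $\phi(\lambda(A)^p)=\phi(A^p)$ and $\phi(\lambda(B)^q)=\phi(B^q)$. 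Combining the three displays with $\phi(|AB|)=\phi(\lambda(|AB|))$ yields $\phi(|AB|)\leq\phi(A^p)^{1/p}\phi(B^q)^{1/q}$, as desired.

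I do not expect a serious obstacle; the work is in checking that the hypotheses line up. The one point worth care is the applicability of \Cref{lem:MajorSymmetry}: it demands a convex, permutation-invariant function on all of $\mathbb{R}^n$, and that is precisely what \Cref{lem:PhiExp} provides (convexity of $\phi\circ\exp$) together with the symmetry axiom of \Cref{def:SymmetricForm}. The second point is that for singular $A$ or $B$ some coordinates of $\log\lambda(\cdot)$ degenerate to $-\infty$, which is exactly why the positive definite case is isolated first and the general statement obtained by the $\varepsilon\downarrow 0$ limit; the extreme exponent pairs $\{p,q\}=\{1,+\infty\}$ are obtained by continuity in $p$, just as in the scalar argument of \Cref{lem:PhiExp}.
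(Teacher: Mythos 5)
Your argument is essentially the paper's own proof: majorize $\log\lambda(|AB|)\prec\log(\lambda(A)\lambda(B))$ via \Cref{lem:MatrixMajorization}, pass it through the convex symmetric function $\phi\circ\exp$ (from \Cref{lem:PhiExp}) using \Cref{lem:MajorSymmetry}, then apply the scalar H\"older property of $\phi$. The only difference is that you explicitly isolate the positive-definite case and recover $\mathbf{H}_+^n$ by an $\varepsilon\downarrow 0$ limit to avoid $-\infty$ entries in $\log\lambda$; the paper leaves that routine regularization implicit, but your version is a slightly more careful rendering of the same route.
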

\begin{proof}
By \Cref{lem:MatrixMajorization}, we have $\log \lambda(|AB|)\prec \log (\lambda(A)\lambda(B))$, i.e. $\log \lambda(|AB|)$ is a convex combination of permutations of $\log (\lambda(A)\lambda(B))$. Since $\phi$ is H\"older, $\phi\circ\exp$ is symmetric and convex. Then by \Cref{lem:MajorSymmetry} we have
\[\phi[\lambda(|AB|)]= \phi[\exp\log \lambda(|AB|)] \leq \phi[\exp\log (\lambda(A)\lambda(B))] = \phi[\lambda(A)\lambda(B)].\]
Therefore
\[\phi(|AB|) = \phi[\lambda(|AB|)]\leq \phi[\lambda(A)\lambda(B)]\leq \phi[\lambda(A)^p]^\frac{1}{p}\phi[\lambda(B)^q]^\frac{1}{q} = \phi(A^p)^\frac{1}{p}\phi(B^q)^\frac{1}{q}.\]
\end{proof}

\begin{lemma}[\textbf{Araki-Lieb-Thirring Type Inequality}]
\label{lem:ALT}
If a symmetric form $\phi$ is H\"older, then for any $A,B\in\mathbf{H}_+^n$ and any $s\geq t>0$, 
\begin{equation}\label{eqt:ALT}
\phi\big[(B^\frac{t}{2}A^tB^\frac{t}{2})^\frac{1}{t}\big]\leq \phi\big[(B^\frac{s}{2}A^sB^\frac{s}{2})^\frac{1}{s}\big].
\end{equation}
\end{lemma}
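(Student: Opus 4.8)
The plan is to reduce \eqref{eqt:ALT} to a log-majorization between the eigenvalues of $P:=(B^{t/2}A^{t}B^{t/2})^{1/t}$ and $Q:=(B^{s/2}A^{s}B^{s/2})^{1/s}$. By a routine perturbation $A\mapsto A+\varepsilon I,\ B\mapsto B+\varepsilon I$ followed by $\varepsilon\to0$, using the continuity of $\phi$ on $\mathbf{H}_+^n$ and of the matrix functions $X\mapsto X^{\theta}$ ($\theta>0$) involved, it suffices to treat $A,B\in\mathbf{H}_{++}^n$; then $\log\lambda(P),\log\lambda(Q)\in\mathbb{R}^n$ are well defined. Since $\phi$ is H\"older, \Cref{lem:PhiExp} says $\phi\circ\exp$ is convex on $\mathbb{R}^n$, and it is plainly symmetric, so once we know $\log\lambda(P)\prec\log\lambda(Q)$, \Cref{lem:MajorSymmetry} gives $\phi(P)=\phi\big(\exp\log\lambda(P)\big)\le\phi\big(\exp\log\lambda(Q)\big)=\phi(Q)$, which is \eqref{eqt:ALT}.

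The first step is the largest-eigenvalue case $\lambda_1(P)\le\lambda_1(Q)$. Writing $B^{t/2}A^{t}B^{t/2}=(A^{t/2}B^{t/2})^{*}(A^{t/2}B^{t/2})$ yields $\lambda_1(P)=\|A^{t/2}B^{t/2}\|^{2/t}$, and likewise $\lambda_1(Q)=\|A^{s/2}B^{s/2}\|^{2/s}$. Setting $r:=t/s\in(0,1]$ and $C:=A^{s/2},\ D:=B^{s/2}\in\mathbf{H}_{++}^n$, the claim $\lambda_1(P)\le\lambda_1(Q)$ becomes exactly the Cordes inequality $\|C^{r}D^{r}\|\le\|CD\|^{r}$. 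I would prove this from the L\"owner--Heinz theorem: by homogeneity assume $\|CD\|=1$, i.e. $DC^{2}D\preceq I$, i.e. $C^{2}\preceq D^{-2}$; raising to the power $r$ (operator monotonicity of $x\mapsto x^{r}$ for $r\in[0,1]$) gives $C^{2r}\preceq D^{-2r}$, and conjugating by $D^{r}$ yields $D^{r}C^{2r}D^{r}\preceq I$, so $\|C^{r}D^{r}\|^{2}=\|D^{r}C^{2r}D^{r}\|\le1$.

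To pass from $k=1$ to the full log-majorization I would apply the previous step to the antisymmetric tensor powers $\wedge^{k}A,\wedge^{k}B\in\mathbf{H}_{++}^{\binom{n}{k}}$ for each $1\le k\le n$. By the product and power properties of $\wedge^{k}$ recalled in \Cref{subsec:AntiTensor}, $\wedge^{k}P=\big((\wedge^{k}B)^{t/2}(\wedge^{k}A)^{t}(\wedge^{k}B)^{t/2}\big)^{1/t}$ and analogously for $\wedge^{k}Q$, so the $k=1$ result applied to $\wedge^{k}A,\wedge^{k}B$ gives $\lambda_1(\wedge^{k}P)\le\lambda_1(\wedge^{k}Q)$. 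Since $\lambda_1(\wedge^{k}P)=\prod_{j=1}^{k}\lambda_j(P)$ and $\lambda_1(\wedge^{k}Q)=\prod_{j=1}^{k}\lambda_j(Q)$, this reads $\prod_{j=1}^{k}\lambda_j(P)\le\prod_{j=1}^{k}\lambda_j(Q)$ for every $k$, i.e. $\log\lambda(P)\prec_w\log\lambda(Q)$; and for $k=n$ both products equal $\det(A)\det(B)$, so in fact $\log\lambda(P)\prec\log\lambda(Q)$. Together with the reduction of the first paragraph this proves \eqref{eqt:ALT}.

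The only genuinely analytic input is the $k=1$ (operator-norm) case, namely the Cordes inequality, equivalently the operator-norm instance of Araki's inequality; the antisymmetric-tensor bootstrap to general $k$ and the transfer to an arbitrary H\"older symmetric form via \Cref{lem:MajorSymmetry} are mechanical given the preparations of \Cref{sec:Preparations}. If one prefers not to invoke the L\"owner--Heinz theorem, the Cordes step can instead be obtained from Stein's operator interpolation, the same tool used elsewhere in the paper.
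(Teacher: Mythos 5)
Your argument is correct and follows the same route the paper takes: establish the operator-norm (largest-eigenvalue) comparison, bootstrap it to a log-majorization via antisymmetric tensor powers, and transfer it to $\phi$ via the convexity of $\phi\circ\exp$ and \Cref{lem:MajorSymmetry}. The only cosmetic differences are that you prove the base operator-norm step (Cordes/Heinz) from the L\"owner--Heinz theorem whereas the paper simply cites Heinz's inequality, and you explicitly handle the degenerate case by a perturbation to $\mathbf{H}_{++}^n$, which the paper leaves implicit.
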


\begin{proof} This proof is due to Araki \cite{araki1990inequality}. For any $p\geq 1$, from an inequality of Heinz that 
\[\|B^\frac{p}{2}A^pB^\frac{p}{2}\|^\frac{1}{p} =\|A^\frac{p}{2}B^\frac{p}{2}\|^\frac{2}{p} \geq \|A^\frac{1}{2}B^\frac{1}{2}\|^{2} = \|B^\frac{1}{2}AB^\frac{1}{2}\|, \]
we obtain that $\lambda_1((B^\frac{p}{2}A^pB^\frac{p}{2})^\frac{1}{p})\geq \lambda_1(B^\frac{1}{2}AB^\frac{1}{2})$. Substituting $A,B$ by $\wedge ^{k}A,\wedge ^{k}B$ respectively, we further obtain that 
\begin{align*}
\prod_{j=1}^k\lambda_j((B^\frac{p}{2}A^pB^\frac{p}{2})^\frac{1}{p}) =&\ \lambda_1\Big(\big((\wedge ^{k}B)^\frac{p}{2}(\wedge ^{k}A)^p(\wedge ^{k}B)^\frac{p}{2}\big)^\frac{1}{p}\Big)\\
\geq &\ \lambda_1\Big((\wedge ^{k}B)^\frac{1}{2}(\wedge ^{k}A)(\wedge ^{k}B)^\frac{1}{2}\Big) = \prod_{j=1}^k\lambda_j(B^\frac{1}{2}AB^\frac{1}{2}).
\end{align*}
Therefore $\log \lambda((B^\frac{p}{2}A^pB^\frac{p}{2})^\frac{1}{p})\succ \log \lambda(B^\frac{1}{2}AB^\frac{1}{2})$, i.e. $\log \lambda(B^\frac{1}{2}AB^\frac{1}{2})$ is a convex combination of permutations of $\log \lambda((B^\frac{p}{2}A^pB^\frac{p}{2})^\frac{1}{p})$. Since $\phi$ is H\"older, $\phi\circ\exp$ is convex on $\mathbb{R}^n$, by \Cref{lem:MajorSymmetry} we have
\[\phi(B^\frac{1}{2}AB^\frac{1}{2})=\phi\big[\exp\big(\log \lambda(B^\frac{1}{2}AB^\frac{1}{2})\big)\big]\leq \phi\big[\exp\big(\log \lambda((B^\frac{p}{2}A^pB^\frac{p}{2})^\frac{1}{p})\big)\big] = \phi((B^\frac{p}{2}A^pB^\frac{p}{2})^\frac{1}{p}).\]
Then substituting $A,B$ by $A^t,B^t$ and choosing $p=\frac{s}{t}$ yields inequality \eqref{eqt:ALT}.
\end{proof}

\begin{lemma}[\textbf{Golden-Thompson Type Inequality}]
\label{lem:GT} 
If a symmetric form $\phi$ is H\"older, then for any $A,B\in\mathbf{H}^n$, 
\begin{equation}\label{eqt:GT}
\phi[\exp(A+B)]\leq \phi\big[\big|\exp(A)\exp(B)\big|\big].
\end{equation}
\end{lemma}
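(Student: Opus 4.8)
The plan is to deduce this Golden--Thompson type inequality directly from the Araki--Lieb--Thirring type inequality of \Cref{lem:ALT}, using the Lie--Trotter product formula in the spirit of Araki's original argument. The key bookkeeping observation is that, since $A,B\in\mathbf{H}^n$, the matrices $e^A,e^B$ are positive definite and
\[|e^Ae^B|^2 = (e^Ae^B)^*(e^Ae^B) = e^Be^{2A}e^B,\qquad\text{so}\qquad |e^Ae^B| = \big((e^B)^{2/2}(e^A)^{2}(e^B)^{2/2}\big)^{1/2}.\]
In other words, the right-hand side of \eqref{eqt:GT} is exactly the quantity appearing on the right of \eqref{eqt:ALT} when the Hölder symmetric form is evaluated at $e^A,e^B\in\mathbf{H}_+^n$ with exponent $s=2$.

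Concretely, I would proceed as follows. First, invoke the symmetrized Lie--Trotter formula
\[\exp(A+B) = \lim_{\varepsilon\to 0^+}\big(e^{\varepsilon B/2}e^{\varepsilon A}e^{\varepsilon B/2}\big)^{1/\varepsilon},\]
the point of the symmetric bracketing being that each approximant $\big(e^{\varepsilon B/2}e^{\varepsilon A}e^{\varepsilon B/2}\big)^{1/\varepsilon}$ is a genuine element of $\mathbf{H}_{++}^n$, so that $\phi$ is defined on it. Second, for every $\varepsilon\in(0,2]$ apply \Cref{lem:ALT} with $e^A,e^B\in\mathbf{H}_+^n$ in place of $A,B$ and with exponents $t=\varepsilon\leq s=2$; since $(e^B)^{\varepsilon/2}(e^A)^{\varepsilon}(e^B)^{\varepsilon/2}=e^{\varepsilon B/2}e^{\varepsilon A}e^{\varepsilon B/2}$, this yields
\[\phi\big[\big(e^{\varepsilon B/2}e^{\varepsilon A}e^{\varepsilon B/2}\big)^{1/\varepsilon}\big]\ \leq\ \phi\big[\big(e^Be^{2A}e^B\big)^{1/2}\big]\ =\ \phi\big[\,|e^Ae^B|\,\big].\]
Third, let $\varepsilon\to 0^+$: the eigenvalues of a Hermitian matrix depend continuously on it and $\phi$ is continuous by \Cref{def:SymmetricForm}, so the left-hand side converges to $\phi[\exp(A+B)]$, and \eqref{eqt:GT} follows.

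I do not expect a serious obstacle here; the substance is already packaged in \Cref{lem:ALT}. The only points requiring care are the identity $|e^Ae^B|^2=e^Be^{2A}e^B$ (which uses that $A,B$ are Hermitian), the choice of the symmetric Trotter bracketing so that the approximants remain in $\mathbf{H}_+^n$, and the legitimacy of the limit, which is handled by continuity of $\phi$ and of the spectral map. An alternative, essentially equivalent route would be to first establish the log-majorization $\log\lambda(\exp(A+B))\prec\log\lambda(|e^Ae^B|)$ --- the partial-product inequalities $\prod_{j=1}^{k}\lambda_j$ coming from the antisymmetric-tensor/Heinz argument used in the proof of \Cref{lem:ALT} applied to the Trotter approximants, and the equality at $k=n$ from $\det\exp(A+B)=e^{\trace(A+B)}=|\det(e^Ae^B)|$ --- and then conclude by \Cref{lem:MajorSymmetry} applied to the convex symmetric function $\phi\circ\exp$; but the direct reduction to \Cref{lem:ALT} above is shorter and I would present that one.
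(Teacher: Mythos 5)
Your proof is correct and follows essentially the same route as the paper: both specialize \Cref{lem:ALT} to $e^A,e^B\in\mathbf{H}_+^n$ with $s=2$, identify the right-hand side with $\phi\bigl[|e^A e^B|\bigr]$, and pass to the limit $t\to 0^+$ via the symmetric Lie--Trotter formula and continuity of $\phi$. The alternative log-majorization route you sketch at the end is also sound but, as you note, is just an unpacking of the same argument, so presenting the direct reduction to \Cref{lem:ALT} is the right call.
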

\begin{proof} If we choose take $s=2,t\rightarrow0$ and replace $A,B$ by $\exp(A),\exp(B)$ respectively in inequality \eqref{eqt:ALT}, the right hand side becomes 
\[\phi\big[\big(\exp(B)\exp(2A)\exp(B)\big)^\frac{1}{2}\big] = \phi\big[\big|\exp(A)\exp(B)\big|\big],\]
while the left hand side becomes 
\begin{align*}\lim_{t\searrow0}\phi\big[\big(\exp(\frac{t}{2}B)\exp(tA)\exp(\frac{t}{2}B)\big)^\frac{1}{t}\big] =&\ \phi\big[\lim_{t\searrow0}\big(\exp(\frac{t}{2}B)\exp(tA)\exp(\frac{t}{2}B)\big)^\frac{1}{t}\big]\\
=&\ \phi[\exp(A+B)], 
\end{align*}
where we have used the Lie product formula that $\lim_{t\searrow0}\big(\exp(\frac{t}{2}B)\exp(tA)\exp(\frac{t}{2}B)\big)^\frac{1}{t} = \exp(A+B)$. So we obtain inequality \eqref{eqt:GT}.
\end{proof}

\begin{lemma}\label{lem:MaxtrixPhiExp}
If a symmetric form $\phi$ is H\"older, then $A\mapsto \phi(\exp(A))$ is convex on $\mathbf{H}^n$.
\end{lemma}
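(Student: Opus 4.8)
The plan is to prove the convexity inequality
\[
\phi\big(\exp(\tau A+(1-\tau)B)\big)\ \leq\ \tau\,\phi(\exp A)+(1-\tau)\,\phi(\exp B),\qquad A,B\in\mathbf{H}^n,\ \tau\in[0,1],
\]
directly, by chaining together three facts that are already available. Fix $A,B\in\mathbf{H}^n$ and $\tau\in(0,1)$; the endpoint cases $\tau\in\{0,1\}$ are trivial.

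First I would invoke the Golden--Thompson type inequality (\Cref{lem:GT}) with $A,B$ replaced by $\tau A$ and $(1-\tau)B$, which gives
\[
\phi\big(\exp(\tau A+(1-\tau)B)\big)\ \leq\ \phi\big(\big|\exp(\tau A)\exp((1-\tau)B)\big|\big).
\]
Next, noting that $\exp(\tau A),\exp((1-\tau)B)\in\mathbf{H}_{++}^n$ and that $\tfrac1\tau,\tfrac1{1-\tau}\in[1,+\infty]$ are conjugate exponents, I would apply the matrix H\"older inequality (\Cref{lem:MatrixHolder}); combined with the functional-calculus identity $\exp(\tau A)^{1/\tau}=\exp A$ (and likewise for $B$) this yields
\[
\phi\big(\big|\exp(\tau A)\exp((1-\tau)B)\big|\big)\ \leq\ \phi(\exp A)^{\tau}\,\phi(\exp B)^{1-\tau}.
\]
Finally, the weighted arithmetic--geometric mean inequality bounds the right-hand side by $\tau\,\phi(\exp A)+(1-\tau)\,\phi(\exp B)$, which is the asserted convexity.

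A variant worth recording: one may instead first prove only the midpoint case $\tau=\tfrac12$, where \Cref{lem:MatrixHolder} is used with $p=q=2$ and the scalar step is the ordinary AM--GM inequality, and then upgrade midpoint convexity to full convexity using the continuity of $A\mapsto\phi(\exp A)$, which follows from continuity of $\phi$ on $\mathbb{R}_+^n$ and continuity of the eigenvalue map. I would present the direct route above, since it avoids the continuity argument. I do not expect a genuine obstacle here: all the analytic substance has been packaged into \Cref{lem:GT} and \Cref{lem:MatrixHolder}, so the only points that need care are that $p=1/\tau$ and $q=1/(1-\tau)$ are indeed H\"older-conjugate in $[1,+\infty]$ and that the powers line up correctly under functional calculus; the rest is bookkeeping.
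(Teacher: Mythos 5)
Your proof is correct and follows essentially the same three-step chain as the paper's: Golden--Thompson (\Cref{lem:GT}), then the matrix H\"older inequality, then AM--GM. The only difference is cosmetic: the paper attributes the middle inequality to \Cref{lem:ALT}, which appears to be a citation slip, since the step used is precisely the matrix H\"older inequality of \Cref{lem:MatrixHolder} as you correctly identify.
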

\begin{proof}
For any $A,B\in \mathbf{H}^n$ and any $\tau\in[0,1]$, we have
\begin{align*} 
\phi[\exp(\tau A+(1-\tau)B)]\leq&\ \phi\big[\big|\exp(\tau A)\exp((1-\tau)B)\big|\big]\\
\leq&\ \phi[\exp(A)]^\tau \phi[\exp(B)]^{1-\tau}\\
\leq&\ \tau \phi[\exp(A)] + (1-\tau)\phi[\exp(B)].
\end{align*}
The first inequality is Golden-Thompson (\Cref{lem:GT}), and the second inequality is H\"older's (\Cref{lem:ALT}).
\end{proof}

\begin{lemma}[\textbf{Concavity/Convexity Preserving}]
\label{lem:ConPreserving}
Let $\phi$ be a concave (or convex) symmetric form, and $f:\mathbb{R}_+\rightarrow \mathbb{R}_+$ be a concave (or convex) function. Then the map $A\mapsto \phi(f(A))$ is concave (or convex) on $\mathbf{H}_+^n$. In particular, $A\mapsto \phi(A)$ is concave (or convex) on $\mathbf{H}_+^n$.
\end{lemma}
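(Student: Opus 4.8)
The plan is to prove the concave case in full; the convex case follows by reversing every inequality, with one small structural change indicated at the end. Fix $A,B\in\mathbf{H}_+^n$ and $\tau\in[0,1]$, and set $C=\tau A+(1-\tau)B$. A preliminary remark that I will use repeatedly: a concave $f:\mathbb{R}_+\to\mathbb{R}_+$ is automatically nondecreasing, since if $f(x_1)>f(x_2)$ for some $x_1<x_2$ then concavity forces $f(x)\to-\infty$ as $x\to\infty$, contradicting $f\ge 0$. Consequently $f(C)$ has the same eigenvectors as $C$, and the eigenspace of $f(C)$ belonging to its $k$ smallest eigenvalues is spanned by eigenvectors of $C$ for its $k$ smallest eigenvalues.

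The heart of the matter is the eigenvalue inequality
\[
\sum_{i=n-k+1}^{n}\lambda_i\big(f(C)\big)\ \ge\ \tau\sum_{i=n-k+1}^{n}\lambda_i\big(f(A)\big)+(1-\tau)\sum_{i=n-k+1}^{n}\lambda_i\big(f(B)\big),\qquad 1\le k\le n .
\]
To establish it I would take orthonormal eigenvectors $v_1,\dots,v_k$ of $C$ for its $k$ smallest eigenvalues, so that by the remark above the left‑hand side equals $\sum_{j=1}^k\langle v_j,f(C)v_j\rangle=\sum_{j=1}^k f\big(\langle v_j,Cv_j\rangle\big)$. Since $\langle v_j,Cv_j\rangle=\tau\langle v_j,Av_j\rangle+(1-\tau)\langle v_j,Bv_j\rangle$, concavity of $f$ gives $f(\langle v_j,Cv_j\rangle)\ge\tau f(\langle v_j,Av_j\rangle)+(1-\tau)f(\langle v_j,Bv_j\rangle)$, while Jensen's inequality for the concave $f$ (writing $\langle v_j,f(A)v_j\rangle$ as a convex combination of the eigenvalues of $A$) gives $f(\langle v_j,Av_j\rangle)\ge\langle v_j,f(A)v_j\rangle$, and likewise for $B$. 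Summing over $j$ and invoking the variational characterisation of the sum of the $k$ smallest eigenvalues, $\sum_{j=1}^k\langle v_j,f(A)v_j\rangle=\trace[P_Vf(A)]\ge\sum_{i=n-k+1}^n\lambda_i(f(A))$ with $P_V=\sum_j v_jv_j^*$, yields the displayed inequality. Because $\lambda(f(A))$ and $\lambda(f(B))$ are sorted nonincreasingly, so is $v:=\tau\lambda(f(A))+(1-\tau)\lambda(f(B))$, and the inequality says exactly that $\lambda(f(C))$ is weakly supermajorised by $v$: the sum of the $k$ smallest entries of $\lambda(f(C))$ is at least that of $v$ for every $k$.

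To conclude, I would use the supermajorisation analogue of \Cref{lem:MajorBridge} — obtained by applying \Cref{lem:MajorBridge} to $-\lambda(f(C))$ and $-v$ — to produce $c\in\mathbb{R}_+^n$ (nonnegative since $c\prec v$ and $v\ge0$) with $\lambda(f(C))\ge c\prec v$. Then $\phi(f(C))=\phi(\lambda(f(C)))\ge\phi(c)\ge\phi(v)$, the first step by monotonicity of $\phi$ and the second because a symmetric concave $\phi$ is Schur‑concave (the concave counterpart of \Cref{lem:MajorSymmetry}, proved identically). Finally $\phi(v)=\phi\big(\tau\lambda(f(A))+(1-\tau)\lambda(f(B))\big)\ge\tau\phi(\lambda(f(A)))+(1-\tau)\phi(\lambda(f(B)))=\tau\phi(f(A))+(1-\tau)\phi(f(B))$ by concavity of $\phi$ on $\mathbb{R}_+^n$. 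The ``in particular'' assertion is the case $f=\mathrm{id}$. In the convex case $f$ need not be monotone, so I would instead pick an eigenbasis $\{w_l\}$ of $C$ and an index set $S$ of size $k$ with $\sum_{l\in S}f(\lambda_l(C))=\sum_{i=1}^k\lambda_i(f(C))$; the chain $\sum_{i=1}^k\lambda_i(f(C))=\sum_{l\in S}f(\langle w_l,Cw_l\rangle)\le\sum_{l\in S}[\tau f(\langle w_l,Aw_l\rangle)+(1-\tau)f(\langle w_l,Bw_l\rangle)]\le\tau\trace[Qf(A)]+(1-\tau)\trace[Qf(B)]$ (Jensen in the opposite direction, $Q=\sum_{l\in S}w_lw_l^*$) together with the Ky Fan maximum principle gives $\lambda(f(C))\prec_w v$, and then \Cref{lem:MajorBridge}, \Cref{lem:MajorSymmetry}, and convexity of $\phi$ on $\mathbb{R}_+^n$ finish the argument.

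The main obstacle is precisely the eigenvalue inequality above: the naive hope is $f(C)\succeq\tau f(A)+(1-\tau)f(B)$, which would make the whole statement immediate by monotonicity of $\phi$, but this is operator concavity of $f$, a property that scalar concave functions generally do not have. The device that rescues the proof is to work only inside the extremal ($k$‑smallest, respectively $k$‑largest) eigenspace of $f(C)$ — which is spanned by eigenvectors of $C$ — and to route the resulting scalar quantities $\langle v_j,Av_j\rangle$ through Jensen's inequality for the scalar $f$; after that only routine majorisation bookkeeping remains.
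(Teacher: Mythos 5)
Your proof is correct, but it takes a genuinely different route from the paper's. The paper proves the lemma without ever looking at eigenvectors of $C$: it invokes \Cref{lem:MatrixMajorization} and \Cref{lem:MajorKey} to write $\lambda(C)=D\bigl(\tau\lambda(A)+(1-\tau)\lambda(B)\bigr)$ for a doubly stochastic $D$, defines the auxiliary vector $x=D\bigl(\tau f(\lambda(A))+(1-\tau)f(\lambda(B))\bigr)$, and then observes two things: first, $x\prec\tau f(\lambda(A))+(1-\tau)f(\lambda(B))$ by \Cref{lem:MajorKey}, so \Cref{lem:MajorSymmetry} (concave version) and concavity of $\phi$ on $\mathbb{R}_+^n$ give $\phi(x)\geq\tau\phi(f(A))+(1-\tau)\phi(f(B))$; second, applying scalar Jensen to each coordinate $\lambda_i(C)=\sum_j D_{ij}\bigl(\tau\lambda_j(A)+(1-\tau)\lambda_j(B)\bigr)$ gives $f(\lambda(C))\geq x$ pointwise, whence $\phi(f(C))\geq\phi(x)$ by monotonicity. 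The exact same $D$ that represents the majorization of $\lambda(C)$ is reused to transport the argument through $f$, so no monotonicity of $f$ is needed, no Ky Fan principle is needed, \Cref{lem:MajorBridge} is never invoked, and the concave and convex cases are literally mirror images of one another. Your argument instead establishes the weak supermajorization $\lambda(f(C))\prec^w\tau\lambda(f(A))+(1-\tau)\lambda(f(B))$ from scratch via eigenvectors of $C$, Jensen's inequality in the form $f(\langle v,Av\rangle)\geq\langle v,f(A)v\rangle$, and the Ky Fan minimum principle, then passes through \Cref{lem:MajorBridge}. This is correct (including the preliminary claim that a concave $f:\mathbb{R}_+\to\mathbb{R}_+$ is automatically nondecreasing, and the check that $c\prec v$ with $v\geq 0$ forces $c\geq 0$), but it is longer, it requires a separate structure for the convex case since a nonnegative convex function need not be monotone, and it ends up proving the needed weak majorization by hand where the paper extracts the stronger ingredient $f(\lambda(C))\geq x$ with $x\prec v$ almost for free from the doubly stochastic representation. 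What your route buys is independence from \Cref{lem:MajorKey} at the first step (you obtain the eigenvalue-sum inequality directly from variational principles rather than from the Birkhoff-style representation), at the price of extra case analysis and an extra lemma (\Cref{lem:MajorBridge}).
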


\begin{proof} We only prove the concave case here; the proof for the convex case is similar. For any $A,B\in \mathbf{H}_+^n$ and any $\tau\in [0,1]$, let $C=\tau A+(1-\tau) B$. We need to show that $\phi[f(C)]\geq \tau\phi[f(A)] + (1-\tau)\phi[f(B)]$. By \Cref{lem:MajorKey} and \Cref{lem:MatrixMajorization}, we know that 
\[\lambda(C) = D\big(\tau \lambda(A)+(1-\tau)\lambda(B)\big)\]
for some doubly stochastic matrix $D$. Define $x = D\big(\tau f(\lambda(A)) + (1-\tau)f(\lambda(B))\big)\in \mathbb{R}_+^n$, so $x\prec \tau f(\lambda(A)) + (1-\tau)f(\lambda(B))$. Then since $\phi$ is concave, by \Cref{lem:MajorSymmetry} we have that
\begin{align*}
\phi(x)\geq&\ \phi[\tau f(\lambda(A)) + (1-\tau)f(\lambda(B))]\\
\geq&\ \tau\phi[f(\lambda(A))]+(1-\tau)\phi[f(\lambda(B))]\\
=&\ \tau\phi[f(A)] + (1-\tau)\phi[f(B)].
\end{align*}
On the other hand, since $f$ is concave, we have
\begin{align*}
f(\lambda_i(C)) =&\ f\Big(\sum_{j=1}^nD_{ij}\big(\tau \lambda_j(A)+(1-\tau) \lambda_j(B)\big)\Big)\\
\geq&\ \sum_{j=1}^nD_{ij}\big(\tau f(\lambda_j(A))+(1-\tau) f(\lambda_j(B))\big) = x_i,\quad 1\leq i\leq n,
\end{align*}
thus $f(\lambda(C))\geq x$. Since $\phi$ is monotone, we have $\phi[f(C)]=\phi[f(\lambda(C))]\geq \phi(x)$. So finally we have
\[\phi[f(C)]\geq \tau\phi[f(A)] + (1-\tau)\phi[f(B)].\]
\end{proof}

\subsection{Operator interpolation}
We will be using Stein's interpolation of linear operators \cite{stein1956interpolation}, which was developed from Hirschman's improvement \cite{hirschman1952convexity} of the Hadamard three-line theorem \cite{hadamard1899theoreme}. Stein's technique was recently adopted by Sutter et al. \cite{sutter2017multivariate} to establish a multivariate extension of the Golden-Thompson inequality, which covers the original Golden-Thompson inequality and its three-matrix extension by Lieb \cite{LIEB1973267}. We will follow the notations in \cite{sutter2017multivariate}. For any $\theta\in(0,1)$, we define a density $\beta_\theta(t)$ on $\mathbb{R}$ by 
\begin{equation} 
\beta_\theta(t) = \frac{\sin(\pi\theta)}{2\theta\big(\cosh(\pi t)+\cos(\pi\theta)\big)},\quad t\in\mathbb{R}.
\end{equation} 
Specially, we define 
\[\beta_0(t) = \lim_{\theta\searrow0}\beta_\theta(t) = \frac{\pi}{2(\cosh(\pi t)+1)},\quad \text{and}\quad \beta_1(t) = \lim_{\theta\nearrow1}\beta_\theta(t) = \delta(t).\]
$\beta_\theta(t)$ is a density since $\beta_\theta(t)\geq0,t\in\mathbb{R}$ and $\int_{-\infty}^{+\infty}\beta_\theta(t)dt=1$. We will always use $\mathcal{S}$ to denote a vertical strip on the complex plane $\mathbb{C}$:
\begin{equation}\label{eqt:S}
\mathcal{S}=\{z\in \mathbb{C}:0\leq \mathrm{Re}(z)\leq 1\}.
\end{equation}

\begin{thm}[Stein-Hirschman]\label{thm:SHInterpolation}
Let $G(z)$ be a map from $\mathcal{S}$ to bounded linear operators on a separable Hilbert space that that is holomorphic in the interior of $\mathcal{S}$ and continuous on the boundary. Let $p_0,p_1\in[1,+\infty],\theta\in[0,1]$, and define $p_\theta$ by 
\[\frac{1}{p_\theta}=\frac{1-\theta}{p_0}+\frac{\theta}{p_1}.\]
Then if $\|G(z)\|_{p_{\mathrm{Re}(z)}}$ is uniformly bounded on $\mathcal{S}$, the following inequality holds:
\begin{equation}\label{eqt:SHInterpolation}
\log\|G(\theta)\|_{p_\theta} \leq  \int_{-\infty}^{+\infty}dt\Big(\beta_{1-\theta}(t)\log\|G(it)\|_{p_0}^{1-\theta}+\beta_\theta(t)\log\|G(1+it)\|_{p_1}^\theta\Big).
\end{equation}
\end{thm}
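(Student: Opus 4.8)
The plan is to reduce the operator statement to a scalar one by pairing with unit vectors, and then apply the classical Hirschman form of the three-line theorem. First I would observe that the Schatten norm admits the variational characterization $\|T\|_p = \sup\{|\trace[TX]| : \|X\|_{p'} \leq 1\}$ (with $1/p+1/p'=1$), and similarly $\|T\|_{\infty}=\sup\{\|Tu\|:\|u\|\leq 1\}$ etc.; in fact it suffices to use the bilinear pairing $\langle v, T u\rangle$ for unit vectors $v,u$ in the separable Hilbert space when dealing with the endpoint $p_0,p_1\in\{1,\infty\}$, but for general $p_0,p_1$ one works with the trace pairing against finite-rank operators of unit dual-norm. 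So fix $\theta\in(0,1)$ (the endpoint cases $\theta=0,1$ being trivial since $\beta_1=\delta$), and fix a finite-rank operator $X$ with $\|X\|_{p_\theta'}\leq 1$ achieving (up to $\varepsilon$) the supremum defining $\|G(\theta)\|_{p_\theta}$. Using a polar-type factorization $X = |X|^{\,\cdot\,}$-style interpolation, one builds an auxiliary holomorphic family $X(z)$ on $\mathcal{S}$ with $X(\theta)=X$, $\|X(it)\|_{p_0'}$ and $\|X(1+it)\|_{p_1'}$ controlled by $1$, following the standard Riesz--Thorin interpolation device (write $X=U|X|$ by polar decomposition and set $X(z)=U|X|^{\gamma(z)}$ for an appropriate affine $\gamma$).

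The second step is to define the scalar function
\begin{equation*}
F(z) = \trace\bigl[G(z)X(z)\bigr],
\end{equation*}
which is holomorphic in the interior of $\mathcal{S}$ and continuous and bounded on $\mathcal{S}$ (boundedness uses the uniform bound on $\|G(z)\|_{p_{\mathrm{Re}(z)}}$ together with H\"older's inequality for Schatten norms, $|\trace[G(z)X(z)]|\leq \|G(z)\|_{p_{\mathrm{Re}(z)}}\|X(z)\|_{p'_{\mathrm{Re}(z)}}$). Then I would invoke Hirschman's strengthening of the Hadamard three-line theorem: for a bounded holomorphic $F$ on $\mathcal{S}$,
\begin{equation*}
\log|F(\theta)| \leq \int_{-\infty}^{+\infty}\bigl(\beta_{1-\theta}(t)\log|F(it)|^{1-\theta} + \beta_\theta(t)\log|F(1+it)|^{\theta}\bigr)\,dt,
\end{equation*}
which is exactly the Poisson-kernel representation of $\log|F|$ on the strip, the densities $\beta_\theta$ being (up to the normalization built into the definition) the harmonic measure of the two boundary lines. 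Applying H\"older on the two boundary lines gives $|F(it)|\leq \|G(it)\|_{p_0}$ and $|F(1+it)|\leq \|G(1+it)\|_{p_1}$ (using $\|X(it)\|_{p_0'}\leq 1$, $\|X(1+it)\|_{p_1'}\leq 1$), and since $F(\theta)=\trace[G(\theta)X]$ approximates $\|G(\theta)\|_{p_\theta}$ within $\varepsilon$, letting $\varepsilon\to 0$ yields \eqref{eqt:SHInterpolation}.

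The main obstacle I anticipate is twofold. First, the analytic justification that $F$ is genuinely holomorphic and \emph{bounded} on all of $\mathcal{S}$ — not merely finite on each vertical line — so that Hirschman's theorem applies; this requires care because the Schatten-$p$ norms of $G(z)$ vary with $\mathrm{Re}(z)$, and one must check that the interpolated norm $p_{\mathrm{Re}(z)}$ matches the dual exponent of $X(z)$ at every $\mathrm{Re}(z)$, which is where the identity $1/p_\theta = (1-\theta)/p_0 + \theta/p_1$ enters crucially. Second, handling the endpoint exponents $p_0$ or $p_1$ equal to $1$ or $+\infty$ (where the dual-norm pairing degenerates) and the separable-but-infinite-dimensional setting: one reduces to finite-dimensional compressions $P_N G(z) P_N$ by an orthogonal projection $P_N$ onto an increasing sequence of finite-dimensional subspaces, proves the inequality there, and passes to the limit using lower semicontinuity of the left side and monotone/dominated convergence on the right. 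I would remark that since we only ever apply Theorem~\ref{thm:SHInterpolation} with $G$ taking values in a fixed finite-dimensional matrix algebra (indeed $\mathbf{H}_+^n$ or an antisymmetric tensor power thereof), these limiting arguments can in fact be bypassed entirely in our applications, and the proof reduces to the finite-rank pairing argument above.
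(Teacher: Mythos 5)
The paper does not prove Theorem~\ref{thm:SHInterpolation} at all: it is imported as a known result of Stein \cite{stein1956interpolation}, building on Hirschman's refinement \cite{hirschman1952convexity} of the three-line theorem (see also \cite{sutter2017multivariate}), so there is no in-paper argument to compare against. Your sketch is precisely the standard proof underlying that citation — Schatten-norm duality against finite-rank $X$, the holomorphic family $X(z)=U|X|^{\gamma(z)}$ with affine $\gamma$ arranged so that $\|X(it)\|_{p_0'}\leq 1$ and $\|X(1+it)\|_{p_1'}\leq 1$ (which is exactly where $1/p_\theta=(1-\theta)/p_0+\theta/p_1$ enters), and Hirschman's Poisson-kernel inequality applied to $F(z)=\trace[G(z)X(z)]$ — and it is correct in outline, including your remark that the finite-dimensional setting of the paper's applications makes the compression/limiting step unnecessary.
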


By choosing $p_0=p_1=p_\theta$ and taking $p_\theta\rightarrow +\infty$, we obtain from inequality \eqref{eqt:SHInterpolation} that 
\[\log\lambda_1(|G(\theta)|) \leq  \int_{-\infty}^{+\infty}dt\Big((1-\theta)\beta_{1-\theta}(t)\log\lambda_1\big(|G(it)|\big)+\theta\beta_\theta(t)\log\lambda_1\big(|G(1+it)|\big)\Big).\]
It is easy to see that, if $G(z)$ satisfies the assumptions in \Cref{thm:SHInterpolation}, so is $\wedge^kG(z)$ for any $1\leq k\leq n$. Thus we may replace $G(z)$ by $\wedge^kG(z)$ in \eqref{eqt:SHInterpolation} and use the fact that 
\[\lambda_1(|\wedge^kG(z)|)=\lambda_1(\wedge^k|G(z)|)=\prod_{j=1}^k\lambda_j(|G(z)|)\] 
to obtain a majorization relation
\begin{equation}\label{eqt:InterMajor}
\sum_{j=1}^k\log\lambda_j(|G(\theta)|) \leq  \sum_{j=1}^k\int_{-\infty}^{+\infty}dt\Big((1-\theta)\beta_{1-\theta}(t)\log\lambda_j\big(|G(it)|\big)+\theta\beta_\theta(t)\log\lambda_j\big(|G(1+it)|\big)\Big).
\end{equation}
The above arguments follow from the work of Hiai et al. \cite{hiai2017generalized}, in which they proved generalized log-majorization theorems in form of \eqref{eqt:InterMajor} for general distributions instead of this particular $\beta_\theta$. This majorization relation grants us the following lemma. 

\begin{lemma}\label{lem:KeyLemma}
Let $G(z):\mathcal{S}\rightarrow\mathbb{C}^{n\times n}$ be a map satisfying the assumptions in \Cref{thm:SHInterpolation}. Then for any symmetric form $\phi$ that is H\"older, and any $\theta\in[0,1], p_\theta,p_0,p_1\in(0,+\infty)$ satisfying $\frac{1}{p_\theta}=\frac{1-\theta}{p_0}+\frac{\theta}{p_1}$, the following inequality holds:
\begin{equation}\label{eqt:PhiInterpolation}
\phi\big(|G(\theta)|^{p_\theta}\big) \leq  \int_{-\infty}^{+\infty}dt\Big(\frac{(1-\theta)p_\theta}{p_0}\beta_{1-\theta}(t)\phi\big(|G(it)|^{p_0}\big)+\frac{\theta p_\theta}{p_1}\beta_\theta(t)\phi\big(|G(1+it)|^{p_1}\big)\Big).
\end{equation}
\end{lemma}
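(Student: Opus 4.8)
The plan is to transport the log-majorization relation \eqref{eqt:InterMajor} through the tools assembled in \Cref{subsec:Majorization} and \Cref{subsec:SymFormProperty}. First I would rescale \eqref{eqt:InterMajor}: multiply both sides by the positive number $p_\theta$ and use $p_\theta(1-\theta)\beta_{1-\theta}(t)\log\lambda_j(|G(it)|)=\tfrac{(1-\theta)p_\theta}{p_0}\beta_{1-\theta}(t)\log\lambda_j\!\big(|G(it)|^{p_0}\big)$ together with $p_\theta\theta\beta_\theta(t)\log\lambda_j(|G(1+it)|)=\tfrac{\theta p_\theta}{p_1}\beta_\theta(t)\log\lambda_j\!\big(|G(1+it)|^{p_1}\big)$. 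Writing $a=\log\lambda\!\big(|G(\theta)|^{p_\theta}\big)$ and $b=\int_{-\infty}^{+\infty}\!\big(\tfrac{(1-\theta)p_\theta}{p_0}\beta_{1-\theta}(t)\log\lambda\!\big(|G(it)|^{p_0}\big)+\tfrac{\theta p_\theta}{p_1}\beta_\theta(t)\log\lambda\!\big(|G(1+it)|^{p_1}\big)\big)\,dt$, the rescaled relation reads $\sum_{j=1}^k a_j\le\sum_{j=1}^k b_j$ for all $1\le k\le n$. Since $j\mapsto\lambda_j(\cdot)$ is non-increasing and every density is nonnegative, both $a$ and $b$ are already listed in non-increasing order, so this is precisely $a\prec_w b$.

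Next I would turn the weak majorization into a bound on $\phi$. By \Cref{lem:MajorBridge} pick $c$ with $a\le c\prec b$. Applying $\exp$ entrywise and invoking the monotonicity of $\phi$ gives $\phi\big(|G(\theta)|^{p_\theta}\big)=\phi(\exp a)\le\phi(\exp c)$. As $\phi$ is H\"older, \Cref{lem:PhiExp} makes $\phi\circ\exp$ convex on $\mathbb{R}^n$, and it is symmetric, so \Cref{lem:MajorSymmetry} applied to $c\prec b$ yields $\phi(\exp c)\le\phi(\exp b)$. Hence $\phi\big(|G(\theta)|^{p_\theta}\big)\le\phi(\exp b)$.

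It remains to bound $\phi(\exp b)$ by the right-hand side of \eqref{eqt:PhiInterpolation}. The weights $\tfrac{(1-\theta)p_\theta}{p_0}$ and $\tfrac{\theta p_\theta}{p_1}$ sum to $p_\theta\big(\tfrac{1-\theta}{p_0}+\tfrac{\theta}{p_1}\big)=1$, and $\int_{-\infty}^{+\infty}\beta_{1-\theta}(t)\,dt=\int_{-\infty}^{+\infty}\beta_\theta(t)\,dt=1$, so $b$ is the barycenter $\int F\,d\mu$ of a probability measure $\mu$ on two copies of $\mathbb{R}$ (carrying masses $\tfrac{(1-\theta)p_\theta}{p_0}\beta_{1-\theta}(t)\,dt$ and $\tfrac{\theta p_\theta}{p_1}\beta_\theta(t)\,dt$), under the map $F$ sending $t$ in the first copy to $\log\lambda\!\big(|G(it)|^{p_0}\big)$ and $t$ in the second to $\log\lambda\!\big(|G(1+it)|^{p_1}\big)$. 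Since $\phi\circ\exp$ is convex and continuous, Jensen's inequality gives $(\phi\circ\exp)(b)\le\int(\phi\circ\exp)(F)\,d\mu$, and $(\phi\circ\exp)\big(\log\lambda(|G(it)|^{p_0})\big)=\phi\big(|G(it)|^{p_0}\big)$, likewise at $1+it$. Reading off the right-hand side then finishes the proof.

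The argument has no genuinely hard step, so the main effort lies in checking that everything is well-defined. The Schatten-norm (hence operator-norm) bound on $G$ forces $\lambda_j(|G(z)|)$ to be uniformly bounded, so each $\log\lambda_j(|G(it)|^{p_0})$ is bounded above and the integrals defining $b$ and the right-hand side of \eqref{eqt:PhiInterpolation} make sense in $[-\infty,+\infty)$. The only delicate point is the possibility of a zero eigenvalue on a set of positive measure, which makes a component of $b$ equal to $-\infty$ and puts us formally outside \Cref{lem:MajorSymmetry}; this is cured by a routine truncation-and-limit argument (replace $a,b$ by $\max(a,-M\mathbf{1}),\max(b,-M\mathbf{1})$, which preserves $\prec_w$ because componentwise truncation is convex and increasing, run the argument, and let $M\to\infty$ using the continuity of $\phi$). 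One also uses the measure-theoretic form of Jensen's inequality for the convex continuous function $\phi\circ\exp$, which is standard.
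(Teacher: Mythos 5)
Your proposal is correct and follows essentially the same route as the paper: rescale the log-majorization relation \eqref{eqt:InterMajor} by $p_\theta$ to get $a\prec_w b$, pass through \Cref{lem:MajorBridge} and \Cref{lem:MajorSymmetry} using the monotonicity of $\phi\circ\exp$ and its convexity (from \Cref{lem:PhiExp}), and then apply Jensen's inequality after noting the coefficients form a probability measure. The only additions beyond the paper's proof are the explicit check that $a$ and $b$ are already sorted (so the partial-sum inequality literally is $\prec_w$) and the truncation-and-limit remark handling possible $-\infty$ entries, both of which are sound.
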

\begin{proof}
Define $x = \log \lambda(|G(\theta)|^{p_\theta})\in\mathbb{R}^n$ and 
\[y = \int_{-\infty}^{+\infty}dt\Big(\frac{(1-\theta)p_\theta}{p_0}\beta_{1-\theta}(t)\log\lambda\big(|G(it)|^{p_0}\big)+\frac{\theta p_\theta}{p_1}\beta_\theta(t)\log\lambda\big(|G(1+it)|^{p_1}\big)\Big)\in\mathbb{R}^n.\]
From inequality \eqref{eqt:InterMajor} we have that $x\prec_w y$. By \Cref{lem:MajorBridge}, there is some $v\in\mathbb{R}^n$ such that $x\leq v\prec y$. Since $\phi$ and $\exp$ are both monotone increasing, we have 
\[\phi\big(|G(\theta)|^{p_\theta}\big) = \phi\big[\exp\log\lambda(|G(\theta)|^{p_\theta})\big] = \phi(\exp(x))\leq \phi(\exp(v)).\]
And since $\phi$ is H\"older, $\phi\circ\exp$ is convex, we have 
\begin{align*}
\phi(\exp(v)) \leq&\ \phi(\exp(y))\\
=&\ \phi\Big[\exp\int_{-\infty}^{+\infty}dt\Big(\frac{(1-\theta)p_\theta}{p_0}\beta_{1-\theta}(t)\log\lambda\big(|G(it)|^{p_0}\big)+\frac{\theta p_\theta}{p_1}\beta_\theta(t)\log\lambda\big(|G(1+it)|^{p_1}\big)\Big)\Big]\\
\leq&\ \int_{-\infty}^{+\infty}dt\Big(\frac{(1-\theta)p_\theta}{p_0}\beta_{1-\theta}(t)\phi\big[\exp\log\lambda\big(|G(it)|^{p_0}\big)\big]+\frac{\theta p_\theta}{p_1}\beta_\theta(t)\phi\big[\exp\log\lambda\big(|G(1+it)|^{p_1}\big)\big]\Big)\\
=&\ \int_{-\infty}^{+\infty}dt\Big(\frac{(1-\theta)p_\theta}{p_0}\beta_{1-\theta}(t)\phi\big(|G(it)|^{p_0}\big)+\frac{\theta p_\theta}{p_1}\beta_\theta(t)\phi\big(|G(1+it)|^{p_1}\big)\Big).
\end{align*}
The second inequality above is Jensen's and due to 
\[\int_{-\infty}^{+\infty}dt\Big(\frac{(1-\theta)p_\theta}{p_0}\beta_{1-\theta}(t)+\frac{\theta p_\theta}{p_1}\beta_\theta(t)\Big) = \frac{(1-\theta)p_\theta}{p_0} + \frac{\theta p_\theta}{p_1} = 1.\]
\end{proof}

The following derivation is due to Sutter et al. \cite{sutter2017multivariate}. If we choose 
\[G(z) = \prod_{j=1}^mA_j^z = A_1^zA_2^z\cdots A_m^z\]
for some $\{A_j\}_{j=1}^m\subset\mathbf{H}_+^n$, and take $p_0\rightarrow +\infty,p_\theta = \frac{p}{\theta},p_1=p$ for some $p\in(0,+\infty)$, we obtain from \Cref{lem:KeyLemma} that 
\begin{equation}\label{eqt:PhiSBT}
\log\phi\Big(\Big|\prod_{j=1}^mA_j^\theta\Big|^\frac{p}{\theta}\Big) \leq \int_{-\infty}^{+\infty}dt\beta_\theta(t)\log\phi\Big(\Big|\prod_{j=1}^mA_j^{1+it}\Big|^p\Big).
\end{equation}
If we further replace replace $A_j$ in inequality \eqref{eqt:PhiSBT} by $\exp(A_j)$, and take $\theta\rightarrow0$, the right hand side of \eqref{eqt:PhiSBT} converges  
\[\int_{-\infty}^{+\infty}dt\beta_0(t)\log\phi\Big(\Big|\prod_{j=1}^m\exp\big((1+it)A_j\big)\Big|^p\Big),\]
since each $\|\exp((1+it)A_j)\|=\|\exp(A_j)\exp(itA_j)\|=\|\exp(A_j)\|$ is uniformly bounded for all $t\in\mathbb{R}$. Moreover, by a multivariate Lie product formula (see e.g. \cite{sutter2017multivariate})
\[\lim_{\theta\searrow0}\Big(\exp(\theta X^{(1)})\exp(\theta X^{(2)})\cdots\exp(\theta X^{(m)})\Big)^\frac{1}{\theta}=\exp\big(\sum_{i=1}^mX_j\big),\]
the left hand side of \eqref{eqt:PhiSBT} becomes 
\begin{align*}
\lim_{\theta\searrow0}\log\phi\Big(\Big|\prod_{j=1}^m\exp\big(\theta A_j\big)\Big|^\frac{p}{\theta}\Big) =&\ \lim_{\theta\searrow0}\log\phi\Big(\Big(\prod_{j=1}^m\exp\big(\theta A_{m-j+1}\big)\prod_{j=1}^m\exp\big(\theta A_j\big)\Big)^\frac{p}{2\theta}\Big)\\
=&\ \log\phi\Big(\Big(\exp\big(\sum_{j=1}^m2A_j\big)\Big)^\frac{p}{2}\Big)\\
=&\ \log\phi\Big(\Big(\exp\big(\sum_{j=1}^mA_j\big)\Big)^p\Big).
\end{align*}
We therefore obtain the following. 
\begin{corollary}\label{cor:PhiMultiGT}
If a symmetric form $\phi$ is H\"older, then for any $A_1,A_2,\dots,A_m\in \mathbf{H}^n$, the following inequality holds:
\begin{equation}\label{eqt:PhiMultiGT}
\log\phi\Big(\Big(\exp\big(\sum_{j=1}^mA_j\big)\Big)^p\Big)\leq \int_{-\infty}^{+\infty}dt\beta_0(t)\log\phi\Big(\Big|\prod_{j=1}^m\exp\big((1+it)A_j\big)\Big|^p\Big).
\end{equation}
\end{corollary}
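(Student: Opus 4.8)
The plan is to run the operator-interpolation argument of the paragraphs preceding \Cref{cor:PhiMultiGT}, taking a little care at the point where the logarithm is pushed inside the integral. I would first fix $A_1,\dots,A_m\in\mathbf{H}_{++}^n$ (the case $A_j\in\mathbf{H}^n$ of the statement is recovered at the end by replacing each $A_j$ with $\exp(A_j)$) and set $G(z)=\prod_{j=1}^m A_j^z=\prod_{j=1}^m\exp(z\log A_j)$. Each factor is entire in $z$ with $\|A_j^z\|=\|A_j^{\mathrm{Re}(z)}\|$, so $G$ is holomorphic on the interior of $\mathcal{S}$, continuous on its boundary, and all its Schatten norms are uniformly bounded on $\mathcal{S}$; hence \Cref{thm:SHInterpolation} and the log-majorization \eqref{eqt:InterMajor} apply. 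The key simplification is that on the line $\mathrm{Re}(z)=0$ the operator $G(it)=\prod_j A_j^{it}$ is a product of unitaries, so $|G(it)|=I_n$ and $\log\lambda_j(|G(it)|)=0$; thus the $\mathrm{Re}(z)=0$ boundary term in \eqref{eqt:InterMajor} vanishes identically. Multiplying the surviving log-majorization by $p/\theta$ gives
\begin{equation*}
\log\lambda\big(|G(\theta)|^{p/\theta}\big)\ \prec_w\ \int_{-\infty}^{+\infty}dt\,\beta_\theta(t)\,\log\lambda\big(|G(1+it)|^{p}\big).
\end{equation*}

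Next I would convert this into \eqref{eqt:PhiSBT}. By \Cref{lem:MajorBridge} there is $v\in\mathbb{R}^n$ with $\log\lambda(|G(\theta)|^{p/\theta})\le v\prec w$, where $w$ denotes the right-hand side of the last display. Monotonicity of $\phi$ and $\exp$ gives $\log\phi(|G(\theta)|^{p/\theta})\le\log\phi(\exp v)$. The crucial point is that $\log\circ\,\phi\circ\exp$ is \emph{convex} and symmetric on $\mathbb{R}^n$: this is exactly H\"older's inequality for $\phi$ in multiplicative form, namely the inequality $\phi[\exp(\tau x+(1-\tau)y)]\le\phi(\exp x)^\tau\phi(\exp y)^{1-\tau}$ from the proof of \Cref{lem:PhiExp}, after taking logarithms. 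Hence \Cref{lem:MajorSymmetry} applied to $\log\phi\circ\exp$ gives $\log\phi(\exp v)\le\log\phi(\exp w)$, and Jensen's inequality for the convex function $\log\phi\circ\exp$ against the probability density $\beta_\theta(t)\,dt$ gives $\log\phi(\exp w)\le\int_{-\infty}^{+\infty}dt\,\beta_\theta(t)\log\phi(|G(1+it)|^{p})$; chaining these three steps is \eqref{eqt:PhiSBT}. (This is just the proof of \Cref{lem:KeyLemma} rerun with $\log\phi\circ\exp$ in place of $\phi\circ\exp$.)

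Finally I would pass to \Cref{cor:PhiMultiGT} by substituting $\exp(A_j)$ for $A_j$ in \eqref{eqt:PhiSBT} (legitimate since $\exp(A_j)\in\mathbf{H}_{++}^n$) and letting $\theta\searrow0$. On the right-hand side the integrand $\log\phi\big(|\prod_j\exp((1+it)A_j)|^{p}\big)$ is bounded uniformly in $t$ and $\theta$, because $\|\exp((1+it)A_j)\|=\|\exp(A_j)\|$ confines the singular values of $\prod_j\exp((1+it)A_j)$ to a fixed compact subset of $(0,\infty)$, while $\beta_\theta(t)\to\beta_0(t)$ with a $\theta$-uniform integrable bound; dominated convergence yields the right-hand side of \eqref{eqt:PhiMultiGT}. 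On the left-hand side I would write $|\prod_{j=1}^m\exp(\theta A_j)|^{p/\theta}=\big(\prod_{j=m}^{1}\exp(\theta A_j)\,\prod_{j=1}^{m}\exp(\theta A_j)\big)^{p/(2\theta)}$ and apply the multivariate Lie product formula to these $2m$ factors, whose logarithms sum to $2\sum_j A_j$; this gives operator-norm convergence to $\exp\big(2\sum_j A_j\big)^{p/2}=\big(\exp\sum_j A_j\big)^p$, hence $\log\phi(\cdot)\to\log\phi\big((\exp\sum_j A_j)^p\big)$ by continuity and positivity of $\phi$ on $\mathbf{H}_{++}^n$. Letting $\theta\searrow0$ in \eqref{eqt:PhiSBT} then produces \eqref{eqt:PhiMultiGT}.

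The step I expect to be the main obstacle is obtaining the \emph{logarithmic} inequality \eqref{eqt:PhiSBT}: taking $\log$ of \Cref{lem:KeyLemma} yields only the strictly weaker bound $\log\phi(\cdot)\le\log\int(\cdot)$, so the sharp form genuinely requires that $\phi\circ\exp$ is not merely convex but \emph{log-convex} --- which is precisely H\"older's inequality for $\phi$. The rest is analytic bookkeeping: the $t$-uniform domination needed to interchange the limit $\theta\searrow0$ with the integral, and the multivariate Lie--Trotter limit on the left-hand side.
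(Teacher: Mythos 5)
Your proposal is correct and follows the same interpolation route as the paper, but you have also identified and repaired a genuine imprecision in the paper's exposition. The paper claims to obtain the logarithmic inequality \eqref{eqt:PhiSBT} directly ``from \Cref{lem:KeyLemma}'' after sending $p_0\to\infty$, but \Cref{lem:KeyLemma} only yields the non-logarithmic bound $\phi(|G(\theta)|^{p/\theta})\le\int\beta_\theta(t)\,\phi(|G(1+it)|^p)\,dt$, and taking $\log$ of that gives $\log\phi\le\log\int\phi\,\beta_\theta$, which by concavity of $\log$ is \emph{weaker} than the $\int\log\phi\,\beta_\theta$ appearing in \eqref{eqt:PhiSBT}. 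You correctly observe that the sharp form requires rerunning the proof of \Cref{lem:KeyLemma} with $\log\circ\phi\circ\exp$ in place of $\phi\circ\exp$, and that the convexity of $\log\circ\phi\circ\exp$ is exactly the content of the H\"older property (it is the first, multiplicative inequality inside the proof of \Cref{lem:PhiExp}, before the arithmetic--geometric step that loses the logarithm). All the supporting pieces are in place: $\log\circ\phi\circ\exp$ is symmetric and finite on $\mathbb{R}^n$ because $\phi>0$ on $\mathbb{R}_{++}^n$ by the strict monotonicity and homogeneity axioms, so \Cref{lem:MajorBridge} and \Cref{lem:MajorSymmetry} apply to it exactly as to $\phi\circ\exp$. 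Your handling of the vanishing $\mathrm{Re}(z)=0$ boundary term, the $\theta$-uniform domination of $\beta_\theta$ (e.g.\ by $\tfrac{\pi}{2}\operatorname{sech}(\pi t)$ for $\theta\le\tfrac12$), and the $2m$-factor Lie--Trotter limit on the left-hand side all match the paper's derivation. In short: same approach, but you make the one non-trivial step explicit where the paper merely gestures at it.
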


\Cref{cor:PhiMultiGT} can be seen as a multivariate extension of Golden-Thompson inequality for H\"older symmetric forms. Sutter et al. proved inequality \eqref{eqt:PhiMultiGT} for Schatten p-norms $\phi = \|\cdot\|_p$; Hiai et al. improved this result to more general $\phi=\|f(\cdot)\|$ for any unitary invariant matrix norm $\|\cdot\|$ and any continuous function $f$ such that $\log \circ f\circ\exp$ is convex. We further extend their results to any symmetric form $\phi$ that is H\"older. 

If we choose $m = 2,p=2$ in \Cref{cor:PhiMultiGT} and replace $A_j$ by $\frac{1}{2}A_j$, the right hand side of inequality \eqref{eqt:PhiMultiGT} is independent of $t$ since $\phi$ is unitary invariant. We then recover the Golden-Thompson inequality 
\[\phi\big(\exp(A_1+A_2\big)\leq \phi\big(\exp(A_1)\exp(A_2)\big)\]
that we have obtained in \Cref{lem:GT}. If we choose $m = 3,p=2$ in \Cref{cor:PhiMultiGT} and again replace $A_j$ by $\frac{1}{2}A_j$, we have 
\begin{align*}
&\ \log\phi\big(\exp(A_1+A_2+A_3)\big)\\
\leq&\ \int_{-\infty}^{+\infty}dt\beta_0(t)\log\phi\Big( \exp(A_1)\exp\big(\frac{1+it}{2}A_2\big)\exp(A_3)\exp\big(\frac{1-it}{2}A_2\big) \Big)\\
\leq&\ \log\phi\left( \int_{-\infty}^{+\infty}dt\beta_0(t)\exp(A_1)\exp\big(\frac{1+it}{2}A_2\big)\exp(A_3)\exp\big(\frac{1-it}{2}A_2\big) \right)
\end{align*}
The second inequality above is due to concavity of logarithm and $\phi$. If we define 
\[\mathcal{T}_A[B]=\int_0^{+\infty}dt(A+tI_n)^{-1}B(A+tI_n)^{-1},\quad A,B\in\mathbf{H}_{++}^n,\] 
and use Lemma 3.4 in \cite{sutter2017multivariate} that 
\[\int_0^{+\infty}dt(A^{-1}+tI_n)^{-1}B(A^{-1}+tI_n)^{-1} = \int_{-\infty}^{+\infty}dt\beta_0(t)A^\frac{1+it}{2}BA^\frac{1-it}{2}, \quad A,B\in\mathbf{H}_{++}^n,\]
we then further obtain 
\[\phi\big(\exp(A_1+A_2+A_3)\big) \leq \phi\big(\exp(A_1)\mathcal{T}_{\exp(-A_2)}[\exp(A_3)]\big).\]
This can be seen as a generalization of Lieb's \cite{LIEB1973267} three-matrix extension of the Golden-Thompson inequality that $\trace[\exp(A+B+C)]\leq \trace[\exp(A)\mathcal{T}_{\exp(-B)}[\exp(C)]].$

\section{Proof of main theorems}
\label{sec:Proofs}
The proofs of \Cref{lem:GeneralEpstein},\Cref{thm:GeneralLiebConcavity} and \Cref{thm:GeneralLieb} follow from Huang's work in \cite{huang2019generalizing}, where he applied similar strategies to specific symmetric forms $\phi = \trace_k[\cdot]^\frac{1}{k}$. The key of applying \Cref{lem:KeyLemma} is to choose some proper holomorphic function $G(z)$ and then interpolating on some power in $[0,1]$. In particular, we will interpolation on $s$ to prove \Cref{lem:GeneralEpstein}, and then on $p$ to prove \Cref{thm:GeneralLiebConcavity}.

\begin{proof}[\rm\textbf{Proof of \Cref{lem:GeneralEpstein}}]
We need to show that, for any $A,B\in \mathbf{H}_+^n$ and any $\tau\in[0,1]$, 
\[\tau\phi\big((K^*A^{rs}K)^{\frac{1}{s}}\big)  +(1-\tau)\phi\big((K^*B^{rs}K)^{\frac{1}{s}}\big)  \leq \phi\big((K^*C^{rs}K)^{\frac{1}{s}}\big)  ,\]
where $C=\tau A+(1-\tau)B$. We may assume that $A,B\in \mathbf{H}_{++}^n$ and $K$ is invertible. Once this is done, the general result for $A,B\in \mathbf{H}_+^n$ and $K\in\mathbb{C}^{n\times n}$ can be obtained by continuity. Let $M = C^\frac{rs}{2}K$, and let $M=Q|M|$ be the polar decomposition of $M$ for some unitary matrix $Q$. Since $C,K$ are both invertible, $|M|\in\mathbf{H}_{++}^n$. We then define two functions from $\mathcal{S}$ to $\mathbb{C}^{n\times n}$:
\[G_A(z) = A^\frac{rz}{2}C^{-\frac{rz}{2}}Q|M|^\frac{z}{s},\quad G_B(z) = B^\frac{rz}{2}C^{-\frac{rz}{2}}Q|M|^\frac{z}{s},\quad z\in\mathcal{S},\]
where $\mathcal{S}$ is given by \eqref{eqt:S}. In what follows we will use $X$ for $A$ or $B$. We then have
\begin{align*}
\phi\big((K^*X^{rs}K)^{\frac{1}{s}}\big) =&\  \phi\big((M^*C^{-\frac{rs}{2}}X^{rs}C^{-\frac{rs}{2}}M)^\frac{1}{s}\big)\\
=&\ \phi\big((|M|Q^*C^{-\frac{rs}{2}}X^\frac{rs}{2}X^\frac{rs}{2}C^{-\frac{rs}{2}}Q|M|)^\frac{1}{s}\big)\\
=&\ \phi\big(|G_X(s)|^\frac{2}{s}\big).
\end{align*}
Since $X,C,M$ are now fixed matrices in $\mathbf{H}_{++}^n$, $G_X(z)$ is apparently holomorphic in the interior of $\mathcal{S}$ and continuous on the boundary. Also, it is easy to check that $\|G_X(z)\|$ is uniformly bounded on $\mathcal{S}$, since $\mathrm{Re}(z)\in[0,1]$. Therefore we can use \Cref{lem:KeyLemma} with $\theta = s,p_\theta = \frac{2}{s}$ to obtain 
\begin{align*}
&\ \phi(|G_X(s)|^\frac{2}{s})\leq \int_{-\infty}^{+\infty}dt\Big(\frac{2(1-s)}{sp_0}\beta_{1-s}(t)\phi\big(|G_X(it)|^{p_0}\big)+\frac{2}{p_1}\beta_s(t)\phi\big(|G_X(1+it)|^{p_1}\big)\Big).
\end{align*}
We still need to choose some $p_0,p_1$ satisfying $\frac{1-s}{p_0}+\frac{s}{p_1}=\frac{1}{p_s}=\frac{s}{2}$ to proceed. Note that $G_X(it) = X^\frac{irt}{2}C^{-\frac{irt}{2}}Q|M|^\frac{it}{s}$ is now a unitary matrix for any $t\in\mathbb{R}$ since $X,C,|M|\in\mathbf{H}_{++}^n$, and thus $|G_X(it)|^{p_0} = I_n$ for all $p_0$. Therefore we can take $p_0\rightarrow +\infty,p_1=2$ to obtain 
\[\phi(|G_X(s)|^\frac{2}{s})\leq \int_{-\infty}^{+\infty}dt\beta_s(t)\phi\big(|G_X(1+it)|^2\big),\]
given that $\phi$ is H\"older. Moreover, for each $t\in\mathbb{R}$, we have 
\begin{align*}
&\ \phi\big(|G_X(1+it)|^2\big)\\
=&\ \phi\big(G_X(1+it)^*G_X(1+it)\big) \\
= &\ \phi\big(|M|^\frac{(1-it)}{s}Q^*C^{-\frac{r(1-it)}{2}}X^rC^{-\frac{r(1+it)}{2}}Q|M|^\frac{(1+it)}{s}\big)\\
=&\ \phi\big(|M|^\frac{1}{s}Q^*C^{-\frac{r(1-it)}{2}}X^rC^{-\frac{r(1+it)}{2}}Q|M|^\frac{1}{s}\big)
\end{align*}
where we have used that $\phi$ is unitary invariant. Therefore, by substituting $X=A,B$, we obtain that
\begin{align*}
&\ \tau\phi\big(|G_A(1+it)|^2\big) + (1-\tau)\phi\big(|G_B(1+it)|^2\big) \\
=&\ \tau\phi\big(|M|^\frac{1}{s}Q^*C^{-\frac{r(1-it)}{2}}A^rC^{-\frac{r(1+it)}{2}}Q|M|^\frac{1}{s}\big)\\
&\ + (1-\tau)\phi\big(|M|^\frac{1}{s}Q^*C^{-\frac{r(1-it)}{2}}B^rC^{-\frac{r(1+it)}{2}}Q|M|^\frac{1}{s}\big)\\
\leq &\ \phi\big(|M|^\frac{1}{s}Q^*C^{-\frac{r(1-it)}{2}}(\tau A^r+(1-\tau) B^r)C^{-\frac{r(1+it)}{2}}Q|M|^\frac{1}{s}\big)\\
\leq&\ \phi\big(|M|^\frac{1}{s}Q^*C^{-\frac{r(1-it)}{2}}C^rC^{-\frac{r(1+it)}{2}}Q|M|^\frac{1}{s}\big)\\
=&\ \phi\big(|M|^\frac{2}{s}\big)\\
=&\ \phi\big((M^*M)^\frac{1}{s}\big).
\end{align*}
The first inequality above is due to the concavity of $\phi$ on $\mathbf{H}_+^n$ by \Cref{lem:ConPreserving}, the second inequality is due to (i) that $\phi$ is monotone increasing on $\mathbf{H}_+^n$ and (ii) that $X\mapsto X^r$ is operator concave on $\mathbf{H}_+^n$ for $r\in(0,1]$. Finally, since $\phi\big((M^*M)^\frac{1}{s}\big)$ is independent of $t$, and $\beta_s(t)$ is a density on $\mathbb{R}$, we have that 
\begin{align*}
&\ \tau\phi\big((K^*A^{rs}K)^{\frac{1}{s}}\big)  +(1-\tau)\phi\big((K^*B^{rs}K)^{\frac{1}{s}}\big) \\
=&\ \tau\phi\big(|G_A(s)|^\frac{2}{s}\big) + (1-\tau)\phi\big(|G_B(s)|^\frac{2}{s}\big) \\ 
\leq&\ \phi\big((M^*M)^\frac{1}{s}\big)\\
=&\ \phi\big((K^*C^{rs}K)^{\frac{1}{s}}\big).
\end{align*}
So we have proved the concavity of \eqref{eqt:function1} on $\mathbf{H}_+^n$.
\end{proof}

\begin{proof}[\rm\textbf{Proof of \Cref{thm:GeneralLiebConcavity}}]
To prove the concavity of \eqref{eqt:function2}, we need to show that, for any $A_1,B_1\in \mathbf{H}_+^n,A_2,B_2\in \mathbf{H}_+^m$ and any $\tau\in[0,1]$, 
\[\tau\phi\big((A_2^\frac{qs}{2}K^*A_1^{ps}KA_2^\frac{qs}{2})^{\frac{1}{s}}\big) +(1-\tau)\phi\big((B_2^\frac{qs}{2}K^*B_1^{ps}KB_2^\frac{qs}{2})^{\frac{1}{s}}\big) \leq \phi\big((C_2^\frac{qs}{2}K^*C_1^{ps}KC_2^\frac{qs}{2})^{\frac{1}{s}}\big) ,\]
where $C_j=\tau A_j+(1-\tau)B_j,j=1,2$. Again, we may assume that $A_1,B_1\in \mathbf{H}_{++}^n,A_2,B_2\in \mathbf{H}_{++}^m$. Once this is done, the general result for $A_1,B_1\in \mathbf{H}_+^n,A_2,B_2\in \mathbf{H}_+^m$ can be obtained by continuity. Let $M = C_1^\frac{ps}{2}KC_2^\frac{qs}{2}$, and define two functions from $\mathcal{S}$ to $\mathbb{C}^{n\times n}$:
\[G_A(z) = A_1^\frac{rsz}{2}C_1^{-\frac{rsz}{2}}MC_2^{-\frac{rs(1-z)}{2}}A_2^\frac{rs(1-z)}{2},\quad z\in\mathcal{S},\]
\[G_B(z) = B_1^\frac{rsz}{2}C_1^{-\frac{rsz}{2}}MC_2^{-\frac{rs(1-z)}{2}}B_2^\frac{rs(1-z)}{2},\quad z\in\mathcal{S},\]
where $\mathcal{S}$ is given by \eqref{eqt:S}, and $r=p+q\in(0,1]$. In what follows we may use $X$ for $A$ or $B$. We then have
\begin{align*}
\phi\big((X_2^\frac{qs}{2}K^*X_1^{ps}KX_2^\frac{qs}{2})^{\frac{1}{s}}\big) =&\  \phi\big((X_2^\frac{qs}{2}C_2^{-\frac{qs}{2}}M^*C_1^{-\frac{ps}{2}}X_1^\frac{ps}{2}X_1^\frac{ps}{2}C_1^{-\frac{ps}{2}}MC_2^{-\frac{qs}{2}}X_2^\frac{qs}{2})^\frac{1}{s}\big)\\
=&\ \phi\big(|G_X\big(\frac{p}{r}\big)|^\frac{2}{s}\big).
\end{align*}
Since $X_1,X_2,C,M$ are now fixed matrices in $\mathbf{H}_{++}^n$, $G_X(z)$ is apparently holomorphic in the interior of $\mathcal{S}$ and continuous on the boundary. Also, it is easy to check that $\|G_X(z)\|$ is uniformly bounded on $\mathcal{S}$, since $\mathrm{Re}(z)\in[0,1]$. Therefore we can use \Cref{lem:KeyLemma} with $\theta = \frac{p}{r}, p_0=p_1=p_\theta=\frac{2}{s}$ to obtain 
\begin{equation*}
\phi(|G_X\big(\frac{p}{r}\big)|^\frac{2}{s})\leq \int_{-\infty}^{+\infty}dt\Big(\frac{q}{r}\beta_{1-\frac{p}{r}}(t)\phi\big(|G_X(it)|^\frac{2}{s}\big)+\frac{p}{r}\beta_\frac{p}{r}(t)\phi\big(|G_X(1+it)|^\frac{2}{s}\big)\Big),
\end{equation*} 
since $\phi$ is H\"older. Moreover, for each $t\in\mathbb{R}$, we have 
\begin{align*}
&\ \phi\big(|G_X(1+it)|^\frac{2}{s}\big)\\
=&\ \phi\Big(\big(G_X(1+it)^*G_X(1+it)\big)^\frac{1}{s}\Big) \\
= &\ \phi\Big(\big(X_2^{\frac{irst}{2}}C_2^{-\frac{irst}{2}}M^*C_1^{-\frac{rs(1-it)}{2}}X_1^{rs}C_1^{-\frac{rs(1+it)}{2}}MC_2^{\frac{irst}{2}}X_2^{-\frac{irst}{2}}\big)^\frac{1}{s}\Big)\\
=&\ \phi\Big(\big(M^*C_1^{-\frac{rs(1-it)}{2}}X_1^{rs}C_1^{-\frac{rs(1+it)}{2}}M\big)^\frac{1}{s}\Big),
\end{align*}
where we have used that (i) $X^{it}$ is unitary for any $X\in\mathbf{H}_{++}^n,t\in\mathbb{R}$, (ii) $f(U^*XU)=U^*f(X)U$ for any $X\in\mathbf{H}^n$, any unitary $U\in\mathbb{C}^{n\times n}$ and any function $f$, and (iii) $\phi$ is unitary invariant. Now since $r,s\in(0,1]$, we can use \Cref{lem:GeneralEpstein} to obtain
\begin{align*}
&\ \tau\phi\big(|G_A(1+it)|^\frac{2}{s}\big) + (1-\tau)\phi\big(|G_B(1+it)|^\frac{2}{s}\big) \\
=&\ \tau\phi\Big(\big(M^*C_1^{-\frac{rs(1-it)}{2}}A_1^{rs}C_1^{-\frac{rs(1+it)}{2}}M\big)^\frac{1}{s}\Big) \\
&\ + (1-\tau)\phi\Big(\big(M^*C_1^{-\frac{rs(1-it)}{2}}B_1^{rs}C_1^{-\frac{rs(1+it)}{2}}M\big)^\frac{1}{s}\Big)\\
\leq&\ \phi\Big(\big(M^*C_1^{-\frac{rs(1-it)}{2}}(\tau A_1+(1-\tau)B_1)^{rs}C_1^{-\frac{rs(1+it)}{2}}M\big)^\frac{1}{s}\Big)\\
=&\ \phi\Big(\big(M^*C_1^{-\frac{rs(1-it)}{2}}C_1^{rs}C_1^{-\frac{rs(1+it)}{2}}M\big)^\frac{1}{s}\Big)\\
=&\ \phi\big((M^*M)^\frac{1}{s}\big).
\end{align*}
Similarly, we have that for each $t\in\mathbb{R}$,
\begin{align*}
\phi\big(|G_X(it)|^\frac{2}{s}\big) =&\ \phi\Big(\big(X_2^{\frac{rs(1+it)}{2}}C_2^{-\frac{rs(1+it)}{2}}M^*MC_2^{-\frac{rs(1-it)}{2}}X_2^{\frac{rs(1-it)}{2}}\big)^\frac{1}{s}\Big)\\
=&\ \phi\Big(\big(MC_2^{-\frac{rs(1-it)}{2}}X_2^{rs}C_2^{-\frac{rs(1+it)}{2}}M^*\big)^\frac{1}{s}\Big).
\end{align*}
We have used the fact that $\phi\big(f(X^*X)\big) = \phi\big(f(XX^*)\big)$ for any $X\in\mathbb{C}^{n\times n}$ and any function $f$, since $\phi$ is only a function of eigenvalues and the spectrums of $f(X^*X)$ and $f(XX^*)$ are the same. Then again using \Cref{lem:GeneralEpstein} we obtain that 
\[\tau\phi\big(|G_A(it)|^\frac{2}{s}\big) + (1-\tau)\phi\big(|G_B(it)|^\frac{2}{s}\big)\leq \phi\big((MM^*)^\frac{1}{s}\big) = \phi\big((M^*M)^\frac{1}{s}\big).\]
Finally we have 
\begin{align*}
&\ \tau\phi\big((A_2^\frac{qs}{2}K^*A_1^{ps}KA_2^\frac{qs}{2})^{\frac{1}{s}}\big) +(1-\tau)\phi\big((B_2^\frac{qs}{2}K^*B_1^{ps}KB_2^\frac{qs}{2})^{\frac{1}{s}}\big) \\
=&\ \tau\phi\big(|G_A\big(\frac{p}{r}\big)|^\frac{2}{s}\big) + (1-\tau)\phi\big(|G_B\big(\frac{p}{r}\big)|^\frac{2}{s}\big) \\ 
\leq&\ \int_{-\infty}^{+\infty}dt\Big\{\frac{q}{r}\beta_{1-\frac{p}{r}}(t)\Big(\tau\phi\big(|G_A(it)|^\frac{2}{s}\big)+(1-\tau)\phi\big(|G_B(it)|^\frac{2}{s}\big)\Big) \\ 
&\ \qquad\qquad\ +\frac{p}{r}\beta_\frac{p}{r}(t)\Big(\tau\phi\big(|G_A(1+it)|^\frac{2}{s}\big)+(1-\tau)\phi\big(|G_B(1+it)|^\frac{2}{s}\big)\Big)\Big\} \\
\leq&\ \phi\big((M^*M)^\frac{1}{s}\big) \int_{-\infty}^{+\infty}\Big(\frac{q}{r}\beta_{1-\frac{p}{r}}(t) + \frac{p}{r}\beta_\frac{p}{r}(t)\Big)dt\\
=&\ \phi\big((C_2^\frac{qs}{2}K^*C_1^{ps}KC_2^\frac{qs}{2})^{\frac{1}{s}}\big).
\end{align*}
So we have proved the joint concavity of \eqref{eqt:function2}. 
\end{proof}

\begin{proof} [\rm\textbf{Proof of \Cref{thm:GeneralLieb}}]
We first prove the theorem for $m=1$. Let $r = p_1\in(0,1]$, and $K_N = K_N^* = \exp\big(\frac{1}{2N}H\big),N\geq 1$. Then using the Lie product formula, for any $A\in \mathbf{H}_{++}^n$ we have 
\begin{align*}
\lim_{N\rightarrow+\infty}\phi\Big(\big(K_N^*A^\frac{r}{N}K_N\big)^{N}\Big) =&\ \lim_{N\rightarrow+\infty}\phi\left(\Big(\exp\big(\frac{1}{2N}H\big)\exp\big(\frac{r}{N}\log A\big)\exp\big(\frac{1}{2N}H\big)\Big)^N\right)\\
=&\ \phi\big(\exp(H+r\log A)\big).
\end{align*}
By \Cref{thm:GeneralLiebConcavity}, for each $N\geq1$, $\phi\Big(\big(K_N^*A^\frac{r}{N}K_N\big)^N\Big)$ is concave in $A$, thus the limit function $\phi\big(\exp(H+r\log A)\big)$ is also concave in $A$. 

Now given any $\{A_j\}_{j=1}^m,\{B_j\}_{j=1}^m\subset\mathbf{H}_{++}^n$, and any $\tau\in[0,1]$, let $C_j = \tau A_j+(1-\tau)B_j,1\leq j\leq m$. By \Cref{lem:MaxtrixPhiExp}, $X\mapsto\phi(\exp(X))$ is convex on $\mathbf{H}^n$ since $\phi$ is H\"older, and thus $X\mapsto\phi(\exp(L+X))$ is also convex on $\mathbf{H}^n$ for arbitrary $L\in\mathbf{H}^n$. Define 
\[L= H +\sum_{j=1}^mp_j\log C_j,\quad r=\sum_{j=1}^mp_j\leq 1.\]
We then have that 
\begin{align*}
\phi\big(\exp(H+\sum_{j=1}^mp_j\log X_j)\big) =&\ \phi\Big(\exp\big(H+r\sum_{j=1}^m\frac{p_j}{r}(\log X_j-\log C_j)+ \sum_{j=1}^mp_j\log C_j\big)\Big)\\
=&\ \phi\Big(\exp\big(L+r\sum_{j=1}^m\frac{p_j}{r}(\log X_j-\log C_j)\big)\Big)\\
\leq&\ \sum_{j=1}^m\frac{p_j}{r}\phi\big(\exp(L+r\log X_j-r\log C_j)\big),\quad X_j=A_j,B_j.
\end{align*}
For each $j$, by the concavity of \eqref{eqt:function3} for $m=1$, we have 
\begin{align*}
&\ \tau \phi\big(\exp(L+r\log A_j-r\log C_j)\big) + (1-\tau)\phi\big(\exp(L+r\log B_j-r\log C_j)\big)\\
\leq &\ \phi\big(\exp(L+r\log(\tau A_j+(1-\tau) B_j)-r\log C_j)\big) \\
=&\ \phi\big(\exp(L)\big).
\end{align*}
Therefore we obtain that 
\begin{align*}
&\ \tau \phi\big(\exp(H+\sum_{j=1}^mp_j\log A_j)\big) + (1-\tau)\phi\big(\exp(H+\sum_{j=1}^mp_j\log B_j)\big)\\
\leq&\ \sum_{j=1}^m\frac{p_j}{r}\Big(\tau\phi\big(\exp(L+r\log A_j-r\log C_j)\big) + (1-\tau)\phi\big(\exp(L+r\log B_j-r\log C_j)\big)\Big)\\
\leq &\ \sum_{j=1}^m\frac{p_j}{r} \phi\big(\exp(L)\big)\\
=&\ \phi\big(\exp(H+\sum_{j=1}^mp_j\log C_j)\big),
\end{align*}
that is, \eqref{eqt:function3} is jointly concave on $(\mathbf{H}_{++}^n)^{\times m}$ for all $m\geq 1$.
\end{proof}

\section{Discussions}
\label{sec:Discussions}

The reason why we need to assume our symmetric forms to be H\"older is that we rely on operator interpolation to derive the key inequality in \Cref{lem:KeyLemma}. Roughly speaking, interpolation inequalities are essentially H\"older's inequalities. However, we conjecture that, \Cref{lem:GeneralEpstein}, \Cref{thm:GeneralLiebConcavity} and \Cref{thm:GeneralLieb} hold more generally for symmetric forms that are concave but not necessarily H\"older. In fact, our numerical evidences suggest that Lieb's concavity holds for 
\[\phi_k(x) = \sum_{i=1}^k x_{[n-i+1]},\quad x\in\mathbb{R}_+^n, 1\leq k\leq n,\] 
i.e. the sum of the $k$ smallest entries of $x$. The extension of $\phi_k$ to $\mathbf{H}_+^n$ stands for the sum of the $k$ smallest eigenvalues of a matrix. One can check that for $1\leq k<n$, $\phi_k$ is concave but not H\"older. We consider this special class of concave symmetric forms because if Lieb's concavity holds for every $\phi_k,1\leq k\leq n$, then it also holds for all concave symmetric forms. In fact, if $X\mapsto \phi_k(F(X))$ is concave on $\mathbf{H}_+^n$ for some function $F:\mathbf{H}_+^n\rightarrow \mathbf{H}_+^n$, then for any $A,B\in \mathbf{H}_+^n$ and any $\tau\in[0,1]$, we have
\[\sum_{i=1}^k\lambda_{n-i+1}(F(C))\geq \tau\sum_{i=1}^k\lambda_{n-i+1}(F(A))+(1-\tau)\sum_{i=1}^k\lambda_{n-i+1}(F(B)),\]
where $C=\tau A+(1-\tau)B$. Recall that $\lambda_i(X)$ denotes the $i_{\text{th}}$ largest eigenvalue of $X$. This means that 
\[-\lambda(F(C)) \prec_w - [\tau\lambda(F(A))+(1-\tau)\lambda(F(B))],\]
and thus by \Cref{lem:MajorBridge}, there exist some $v\in \mathbb{R}^n$ and some doubly stochastic matrix $D$ such that 
\[-\lambda(F(C)) \leq v = - D[\tau\lambda(F(A))+(1-\tau)\lambda(F(B))].\]
In particular, $-v\in \mathbb{R}_+^n$. Therefore for arbitrary concave symmetric form $\phi$, we have
\begin{align*}
\phi(F(C)) =&\ \phi[\lambda(F(C))]\\
\geq&\ \phi(-v) \\
=&\ \phi\big[D\big(\tau\lambda(F(A))+(1-\tau)\lambda(F(B))\big)\big]\\
\geq&\ \tau \phi[\lambda(F(A))] +(1-\tau) \phi[\lambda(F(B))]\\
=&\ \tau \phi[F(A)] +(1-\tau) \phi[F(B)].
\end{align*}
That is to say, the concavity of $X\mapsto\phi_k(F(X))$ for all $1\leq k\leq n$ will imply the concavity of $X\mapsto\phi(F(X))$ for arbitrary concave symmetric form $\phi$. But whether Lieb's concavity holds for $\phi_k$ with $1\leq k<n$ still remains unsolved.

\section*{Acknowledgment}
The research was in part supported by the NSF Grant DMS-1613861. The author would like to thank Thomas Y. Hou for his wholehearted mentoring and supporting.

\section*{References}
\bibliographystyle{elsarticle-num}
\bibliography{reference}

\begin{thebibliography}{10}
\expandafter\ifx\csname url\endcsname\relax
  \def\url#1{\texttt{#1}}\fi
\expandafter\ifx\csname urlprefix\endcsname\relax\def\urlprefix{URL }\fi
\expandafter\ifx\csname href\endcsname\relax
  \def\href#1#2{#2} \def\path#1{#1}\fi

\bibitem{LIEB1973267}
E.~H. Lieb, Convex trace functions and the wigner-yanase-dyson conjecture,
  Advances in Mathematics 11~(3) (1973) 267 -- 288.

\bibitem{huang2019generalizing}
D.~Huang, Generalizing lieb's concavity theorem via operator interpolation,
  arXiv preprint arXiv:1904.03304.

\bibitem{stein1956interpolation}
E.~M. Stein, Interpolation of linear operators, Transactions of the American
  Mathematical Society 83~(2) (1956) 482--492.

\bibitem{hiai2017generalized}
F.~Hiai, R.~K{\"o}nig, M.~Tomamichel, Generalized log-majorization and
  multivariate trace inequalities, in: Annales Henri Poincar{\'e}, Vol.~18,
  Springer, 2017, pp. 2499--2521.

\bibitem{sutter2017multivariate}
D.~Sutter, M.~Berta, M.~Tomamichel, Multivariate trace inequalities,
  Communications in Mathematical Physics 352~(1) (2017) 37--58.

\bibitem{lowner1934monotone}
K.~L{\"o}wner, {\"U}ber monotone matrixfunktionen, Mathematische Zeitschrift
  38~(1) (1934) 177--216.

\bibitem{heinz1951beitrage}
E.~Heinz, Beitr{\"a}ge zur st{\"o}rungstheorie der spektralzerleung,
  Mathematische Annalen 123~(1) (1951) 415--438.

\bibitem{carlen2010trace}
E.~A. Carlen, Trace inequalities and quantum entropy: an introductory course,
  Entropy and the quantum 529 (2010) 73--140.

\bibitem{Vershynina:2013}
A.~Vershynina, E.~A. Carlen, E.~H. Lieb, {M}atrix and {O}perator {T}race
  {I}nequalities, Scholarpedia 8~(4) (2013) 30919, revision \#132324.

\bibitem{von1937some}
J.~Von~Neumann, Some matrix-inequalities and metrization of matric space, 1937.

\bibitem{2018arXiv180805550H}
D.~{Huang}, {A generalized Lieb's theorem and its applications to spectrum
  estimates for a sum of random matrices}, arXiv e-prints (2018)
  arXiv:1808.05550\href {http://arxiv.org/abs/1808.05550}
  {\path{arXiv:1808.05550}}.

\bibitem{Carlen2008}
E.~A. Carlen, E.~H. Lieb, A minkowski type trace inequality and strong
  subadditivity of quantum entropy ii: Convexity and concavity, Letters in
  Mathematical Physics 83~(2) (2008) 107--126.

\bibitem{epstein1973remarks}
H.~Epstein, Remarks on two theorems of e. lieb, Communications in Mathematical
  Physics 31~(4) (1973) 317--325.

\bibitem{hiai2010matrix}
F.~Hiai, Matrix analysis: matrix monotone functions, matrix means, and
  majorization, Interdisciplinary Information Sciences 16~(2) (2010) 139--248.

\bibitem{Rotman:2114272}
J.~J. Rotman, {Advanced modern algebra; 2nd ed.}, Graduate studies in
  mathematics, American Mathematical Society, Providence, RI, 2010.

\bibitem{ando1989majorization}
T.~Ando, Majorization, doubly stochastic matrices, and comparison of
  eigenvalues, Linear Algebra and its Applications 118 (1989) 163--248.

\bibitem{marshall1979inequalities}
A.~W. Marshall, I.~Olkin, B.~C. Arnold, Inequalities: theory of majorization
  and its applications, Vol. 143, Springer, 1979.

\bibitem{horn2012matrix}
R.~A. Horn, C.~R. Johnson, Matrix analysis, Cambridge university press, 2012.

\bibitem{araki1990inequality}
H.~Araki, On an inequality of lieb and thirring, Letters in Mathematical
  Physics 19~(2) (1990) 167--170.

\bibitem{hirschman1952convexity}
I.~I. Hirschman, A convexity theorem for certain groups of transformations,
  Journal d'Analyse Math{\'e}matique 2~(2) (1952) 209--218.

\bibitem{hadamard1899theoreme}
J.~Hadamard, et~al., Th{\'e}or{\`e}me sur les s{\'e}ries enti{\`e}res, Acta
  Mathematica 22 (1899) 55--63.

\end{thebibliography}

\end{document}